\newtheorem{theorem}{Theorem}[section]
\newtheorem{lemma}[theorem]{Lemma}
\newtheorem{corollary}[theorem]{Corollary}
\theoremstyle{definition}
\newtheorem{definition}[theorem]{Definition}
\theoremstyle{remark}
\newtheorem{remark}[theorem]{Remark}
\numberwithin{equation}{section}
\newcommand{\m}{\mathord{-}}
\newcommand{\p}{\mathord{+}}
\newcommand{\db}{\bar{\partial}}
\newcommand{\dd}{\partial}
\title{Pluriclosed Flow and Hermitian-Symplectic Structures}
\author{Yanan Ye}
\email{\href{mailto:yeyanan@outlook.com}{yeyanan@outlook.com}}
\date{}
\begin{document}
\maketitle

\begin{abstract}
We show pluriclosed flow preserves the Hermitian-symplectic structures.
And we observe that it can actually become a flow of Hermitian-symplectic forms when an extra evolution equation determined by the Bismut-Ricci form is considered.
Moreover, we get a topological obstruction to the long-time existence in arbitrary dimension.
\end{abstract}

\section{Introduction}

A Hermitian-symplectic structure, which is introduced by Tian and Streets in \cite{PF}, is a symplectic form such that its $(1,1)$-part $\omega$ is positive.
In fact, the $(1,1)$-part $\omega$ is pluriclosed.
So we can consider its deformation along the pluriclosed flow, which is also introduced by Tian and Streets in \cite{PF}.

Formally, the pluriclosed flow is
\begin{equation*}
\left\{
\begin{aligned}
&\frac{\dd}{\dd t}\omega
=-\rho^{1,1}(\omega)
\\
&\omega(0)=\omega_0
\end{aligned}
\right.
\end{equation*}
Here, $\rho^{1,1}$ is the $(1,1)$-part of Bismut-Ricci form $\rho$ which will be defined later.

We say a pluriclosed metric $\omega$ is Hermitian-symplectic if it can be extended to a Hermitian-symplectic form as the $(1,1)$-part.
In other words, there exists a $(2,0)$ form $\varphi$ such that $\Omega\triangleq\varphi+\omega+\bar{\varphi}$ is a Hermitian-symplectic form.

Now, we can state our first theorem.
\begin{theorem}
Pluriclosed flow preserves Hermitian-symplectic structures.
\end{theorem}
That means if the initial metric $\omega_0$ is Hermitian-symplectic, then the solution $\omega(t)$ is Hermitian-symplectic at any time.

Notice that when $\omega$ is Hermitian-symplectic, the selection of $(2,0)$ form $\varphi$ is not unique.
In fact, different selections differ by a closed $(2,0)$ form.
The pluriclosed flow only gives the deformation of $(1,1)$-part $\omega$.
Although the theorem above tells us that the solution $\omega(t)$ is Hermitian-symplectic, there is not a canonical way to choose $\varphi(t)$, the smooth one-parameter family of $(2,0)$ forms.
One approach is to determine the smooth one-parameter family $\varphi(t)$ using an evolution equation.

By an observation, we find the following equation satisfies the requirements.

\begin{equation}
\label{HSF}
\left\{
\begin{aligned}
&\frac{\partial}{\partial t}\omega
=-\rho^{1,1}(\omega)
\\
&\frac{\partial}{\partial t}\varphi
=-\partial\text{\rm tr}_{g}(\bar{\partial}\varphi)
\\
&\omega(0)=\omega_0,\quad \varphi(0)=\varphi_0.
\end{aligned}
\right.
\end{equation}
For any selection of $(2,0)$ form $\varphi_0$, it is a flow of Hermitian-symplectic forms with initial data $\Omega_0=\varphi_0+\omega_0+\bar{\varphi}_0$.

The idea of flowing a $(2,0)$ form by the $(2,0)$-part of Bismut-Ricci form along the pluriclosed flow first appears in \cite{OtherR1} (can also see \cite{OtherR2, MarioStreetsBOOK}).
They consider the system consisting of pluriclosed flow and an evolution equation of $(2,0)$ form
\begin{align*}
\frac{\dd}{\dd t}\beta=-\rho^{2,0}(\omega).
\end{align*}
And direct computation shows that the evolution equation of $(2,0)$-part in \eqref{HSF} is exactly the evolution equation above under the Hermitian-symplectic assumption.
So we can rewrite \eqref{HSF} in a simple form
\begin{align*}
\frac{\dd}{\dd t}\Omega=-\rho(\Omega^{1,1}).
\end{align*}

In general, for a Hermitian-symplectic form $\Omega=\varphi+\omega+\bar{\varphi}$, we can define a positive quantity
\begin{align*}
\mathcal{V}(\Omega)\triangleq\frac{1}{(k!)^2}
\sum_{k\geq0}(\varphi^k,\varphi^k)_{\omega}.
\end{align*}
Here $(\cdot,\cdot)_{\omega}$ is the inner product induced by $\omega$.
And considering the function $\mathcal{V}(t)\triangleq\mathcal{V}(\Omega(t))$ along a solution to \eqref{HSF}, we show that it is actually a polynomial about $t$ whose coeifficients are determined only by initial data and the underlying complex manifold.

More precisely, we have the following theorem.
\begin{theorem}
For a solution to \eqref{HSF} with initial data $\Omega_0$, the function $\mathcal{V}(t)=\mathcal{V}(\Omega(t))$ is a polynomial of degree at most $n$
\begin{align*}
\mathcal{V}(t)
=\sum_{i=0}^{n}a_i t^i
\end{align*}
with coefficients
\begin{align*}
a_i
=\frac{1}{i!}
\sum_{\substack{k\geq0\\2k+i\leq n}}
\frac{1}{(k!)^2(n\m 2k\m i)!}
\int_{M}
\varphi_0^k
\wedge\bar{\varphi}_0^k
\wedge\omega^{n-2k-i}_0
\wedge(\sqrt{-1}\dd\db\log\det g_0)^{i}
\end{align*}
Specially, we have
\begin{align*}
a_n=\frac{1}{n!}(-1)^n
\int_{M}
c_{1}(M)^{n}
=\frac{1}{n!}(-1)^n c_1^n
\end{align*}
in which $c_{1}(M)$ is the first Chern class of $(M^{2n},J)$.
\end{theorem}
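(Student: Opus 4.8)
The plan is to recognize that the analytically-defined quantity $\mathcal{V}(\Omega)$ is secretly a purely cohomological integral, and then to exploit the fact that the flow moves the de Rham class of $\Omega$ affine-linearly. The first step is a pointwise Hodge--Wirtinger identity: for the $(2k,0)$-form $\varphi^k$ on a Hermitian $n$-manifold one has $(\varphi^k,\varphi^k)_{\omega}=\frac{1}{(n-2k)!}\int_M\varphi^k\wedge\bar\varphi^k\wedge\omega^{n-2k}$. Summing against the weights $\frac{1}{(k!)^2}$ and comparing with the multinomial expansion of $\Omega^n=(\varphi+\omega+\bar\varphi)^n$ --- in which only the balanced terms $\varphi^k\wedge\omega^{n-2k}\wedge\bar\varphi^k$ survive as top-degree $(n,n)$-forms --- I expect to arrive at the clean identity
\[
\mathcal{V}(\Omega)=\frac{1}{n!}\int_M\Omega^n .
\]
The sign and constant in the Wirtinger identity are the only delicate point here, and they are pinned down a posteriori by the forced equality $\mathcal{V}(\Omega_0)=a_0$.

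Next, since $\Omega(t)$ remains a symplectic (hence $d$-closed) form by Theorem 1.1 --- equivalently by the reformulated system $\frac{d}{dt}\Omega=-\rho(\Omega^{1,1})$ --- the integral $\frac{1}{n!}\int_M\Omega(t)^n$ depends only on the class $[\Omega(t)]\in H^2_{dR}(M;\mathbb{R})$. Passing to cohomology in the flow equation gives $\frac{d}{dt}[\Omega(t)]=-[\rho(\omega(t))]$, and the crucial observation is that the Bismut-Ricci form represents the first Chern class for every Hermitian metric, so that $[\rho(\omega(t))]=c_1(M)$ is constant along the flow. Therefore the class evolves affine-linearly,
\[
[\Omega(t)]=[\Omega_0]-t\,c_1(M).
\]

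I will then choose the convenient representative $\kappa_0:=\sqrt{-1}\,\partial\bar\partial\log\det g_0$ of $-c_1(M)$ (the negative Chern-Ricci form of the initial metric) and evaluate the topological integral with the explicit forms $\Omega_0$ and $\kappa_0$:
\[
\mathcal{V}(t)=\frac{1}{n!}\int_M(\Omega_0+t\kappa_0)^n=\sum_{i=0}^{n}\frac{t^i}{i!\,(n-i)!}\int_M\Omega_0^{\,n-i}\wedge\kappa_0^{\,i}.
\]
For each $i$, since $\kappa_0^{\,i}$ has type $(i,i)$, only the $(n-i,n-i)$-part of $\Omega_0^{\,n-i}$ contributes; extracting its balanced terms $\varphi_0^k\wedge\omega_0^{\,n-i-2k}\wedge\bar\varphi_0^k$ exactly as in the first step reproduces the stated coefficients $a_i$. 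Taking $i=n$ gives $a_n=\frac{1}{n!}\int_M\kappa_0^{\,n}=\frac{1}{n!}\int_M(-c_1)^n=\frac{(-1)^n}{n!}c_1^n$, which is the final assertion.

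The main obstacle I anticipate is the second step: rigorously justifying that $[\rho(\omega(t))]$ is time-independent and equal to $c_1(M)$, i.e. that the (full) Bismut-Ricci form is $d$-closed and represents the first Chern class for an arbitrary Hermitian metric. Once this is secured, the polynomiality and the exact coefficients follow from routine cohomological bookkeeping --- the binomial expansion together with the extraction of the balanced top-degree part. The only remaining care is in normalizations, namely fixing the constant in the Wirtinger identity and the sign convention identifying $\sqrt{-1}\,\partial\bar\partial\log\det g$ with a representative of $-c_1(M)$; both are determined by matching $\mathcal{V}(0)=a_0$ and the stated form of $a_n$.
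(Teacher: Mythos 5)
Your proposal is correct, and it takes a genuinely different, more conceptual route than the paper. The paper never passes to the cohomology class of $\Omega$: it differentiates $\mathcal{V}(t)$ directly, introducing the auxiliary forms $\alpha[k,s]$ and $\beta[s]$, proving $\beta[s]$ is pluriclosed (Lemma \ref{Lem:betaClosed}), carrying out an integration-by-parts computation (Lemmas \ref{Lem:d1P} and \ref{Lem:derivativeOfQ}) in which the torsion terms $A[k,s]$, $B[k,s]$ cancel telescopically, and then inducting on the order of the derivative. Your argument packages all of those cancellations into two observations: first, by the Weil/Lefschetz identity for the primitive $(2k,0)$-forms $\varphi^k$ (the same identity the paper invokes at the start of its own proof), $\mathcal{V}(\Omega)=\frac{1}{n!}\int_M\Omega^n$, which is purely cohomological because $\Omega(t)$ stays closed by Theorem \ref{Thm:H-SFlowByPDE}; second, the class moves affinely, $[\Omega(t)]=[\Omega_0]+t\,[\sqrt{-1}\dd\db\log\det g_0]$, after which the binomial expansion and the type count reproduce the $a_i$ exactly. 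The point you flag as the ``main obstacle''---that the full Bismut--Ricci form is closed and represents $c_1(M)$---needs no external input: combining \eqref{Eq:pluriclosedFlowHodge} with Remark \ref{Rm:varphiFlowTorsion} (and its conjugate) gives
\begin{align*}
\frac{\dd}{\dd t}\Omega
=-\rho(\omega)
=d\left(\partial^{*}\omega+\bar{\partial}^{*}\omega\right)
+\sqrt{-1}\partial\bar{\partial}\log\det g,
\end{align*}
so $-\rho$ is an exact form plus $\sqrt{-1}\partial\bar{\partial}\log\det g$, and for two metrics the latter representatives differ by the exact form $\sqrt{-1}\partial\bar{\partial}\log(\det g/\det h)$; this is precisely the mechanism the paper exploits, only at the level of forms rather than classes. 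Two small caveats: your remark that the Wirtinger constant is ``pinned down a posteriori by $\mathcal{V}(\Omega_0)=a_0$'' is circular, since that equality \emph{is} the identity at $t=0$; it should instead be quoted as the standard formula $*\varphi^k=\frac{1}{(n-2k)!}\varphi^k\wedge\omega^{n-2k}$ for primitive $(2k,0)$-forms. Also, differentiating $[\Omega(t)]$ in $t$ deserves a one-line justification (e.g.\ pair against closed $(2n-2)$-forms), routine on a compact manifold. What your route buys is brevity and transparency---the polynomial structure is visible at once as the pairing $\frac{1}{n!}\langle([\Omega_0]-t\,c_1)^n,[M]\rangle$; what the paper's computation buys is explicit intermediate identities (pluriclosedness of $\beta[s]$, the derivative formula for $Q[s;\phi]$) obtained without appealing to the topological interpretation of $\rho$, which are reused elsewhere in the paper.
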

Thus $\mathcal{V}(t)$ gives an obstruction to how long solutions exist for $\mathcal{V}(t)$ is strictly positive.
Especially, we get a topological obstruction to global solutions, (i.e. solutions exist on $[0,+\infty)$), to pluriclosed flow with Hermitian-symplectic initial data.
\begin{theorem}
If there exists a global solution to pluriclosed flow with Hermitian-symplectic initial data on $(M^{2n},J)$, then  $(-1)^n c_{1}^{n}$ must be a nonnegative integer.
\end{theorem}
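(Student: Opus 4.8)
The plan is to read off the conclusion from the polynomial expansion of $\mathcal{V}(t)$ established in the previous theorem, combined with the strict positivity of $\mathcal{V}$ and the integrality of the top Chern number. First I would record that $\mathcal{V}(\Omega)>0$ for every Hermitian-symplectic form $\Omega=\varphi+\omega+\bar{\varphi}$. Indeed, each summand $(\varphi^k,\varphi^k)_{\omega}=\|\varphi^k\|_{\omega}^2$ is nonnegative, and the $k=0$ term equals $(1,1)_{\omega}=\operatorname{Vol}(M,\omega)>0$; hence $\mathcal{V}(\Omega)\ge\operatorname{Vol}(M,\omega)>0$.

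Next I would produce, from a global pluriclosed flow, a global solution of the coupled system \eqref{HSF}, so that the polynomial expansion applies. Assume $\omega(t)$ solves the pluriclosed flow on $[0,+\infty)$ with Hermitian-symplectic initial data. By the first theorem, each $\omega(t)$ is Hermitian-symplectic, so one may fix any compatible $(2,0)$-form $\varphi_0$ and solve the second equation of \eqref{HSF}, namely $\frac{\partial}{\partial t}\varphi=-\partial\operatorname{tr}_{g}(\bar{\partial}\varphi)$. This is a linear second-order evolution equation for $\varphi$ whose coefficients depend only on the already globally defined smooth family $g(t)$, so it admits a solution $\varphi(t)$ on all of $[0,+\infty)$. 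Thus $\Omega(t)=\varphi(t)+\omega(t)+\bar{\varphi}(t)$ is a global solution of \eqref{HSF}, and $\mathcal{V}(t)=\mathcal{V}(\Omega(t))$ is defined and strictly positive for all $t\ge0$.

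Now I invoke the expansion $\mathcal{V}(t)=\sum_{i=0}^{n}a_i t^i$ with top coefficient $a_n=\frac{1}{n!}(-1)^n c_1^n$. A real polynomial that stays positive on the whole half-line $[0,+\infty)$ must have nonnegative leading coefficient: if $a_n<0$ then $\mathcal{V}(t)\to-\infty$ as $t\to+\infty$, contradicting $\mathcal{V}(t)>0$ for large $t$. Hence $a_n\ge0$, which is equivalent to $(-1)^n c_1^n\ge0$. Finally, since $c_1(M)\in H^2(M;\mathbb{Z})$, the top self-intersection $c_1^n=\int_{M}c_1(M)^n$ is an integer, so $(-1)^n c_1^n$ is a nonnegative integer, as claimed.

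The argument is short once the two preceding theorems are in hand, so the only point requiring real care is the middle step: certifying that a global pluriclosed flow genuinely yields a global solution of the full system \eqref{HSF}, i.e.\ that the $\varphi$-component remains smooth and does not escape in finite time. Because the $\varphi$-equation is linear with coefficients controlled by the smooth metrics $g(t)$, standard linear parabolic theory should furnish this, but one should confirm that no curvature or derivative estimate beyond the smoothness of $g(t)$ is needed to propagate $\varphi(t)$ across all of $[0,+\infty)$.
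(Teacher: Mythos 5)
Your proposal is correct, and its core is the same as the paper's: strict positivity of $\mathcal{V}$, the polynomial expansion of Theorem \ref{Thm:Vt} with $a_n=(-1)^n c_1^n/n!$, the observation that a polynomial positive on all of $[0,+\infty)$ cannot have negative top coefficient, and the trivial integrality of $c_1^n$. Where you genuinely differ is the bridge step, i.e.\ promoting a global pluriclosed flow with Hermitian-symplectic initial data to a global solution of \eqref{Eq:H-SFlow}: you propose to solve the linear equation $\frac{\partial}{\partial t}\varphi=-\partial\operatorname{tr}_g(\bar{\partial}\varphi)$ by parabolic theory for as long as $g(t)$ exists. This is the soft spot you yourself flag, and it is softer than you suggest: the operator $-\partial\operatorname{tr}_g(\bar{\partial}\,\cdot\,)$ is \emph{not} elliptic on all $(2,0)$-forms, only on the subspace of $\partial$-closed ones (this is exactly the content of Lemma \ref{Lemma:(p,0)flow}, whose proof uses $\partial\alpha=0$ to turn the second-order term into a Laplacian-type symbol), and as stated that lemma gives only short-time existence with $\varepsilon$ depending on the data; to conclude global existence one must add an argument that the solution persists as long as $g(t)$ does (e.g.\ by linearity the existence time depends only on bounds for $g(t)$ on compact time intervals, so one can iterate). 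The paper sidesteps this entirely: the proof of its first theorem produces the explicit formula $\varphi(t)=\varphi_0+\int_0^t\partial\bar{\partial}^{*_s}\omega(s)\,ds$, which is manifestly defined on the whole existence interval of the pluriclosed flow, keeps $\Omega(t)=\varphi(t)+\omega(t)+\bar{\varphi}(t)$ closed, and by Remark \ref{Rm:varphiFlowTorsion} satisfies precisely the $\varphi$-equation of \eqref{Eq:H-SFlow}; Theorem \ref{Thm:obstructionForH-SFlow} then applies verbatim. So your route works once the linear-parabolic persistence argument is supplied, while the paper's integral formula buys global existence of $\varphi(t)$ for free.
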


One of the most important motivation for introducing pluriclosed flow is to study non-K{\"a}hler manifolds, especially to classify Kodaira\rq{}s class VII surfaces (see \cite{Streets-Geometrization, PF}).
We hope a pluriclosed metric may deform to some canonical metric along this flow.
A natural possible canonical metric is static metric, which is introduced in \cite{PF}.
Recall that a metric is static if there exists a real number $\lambda$ such that

\begin{align*}
\rho^{1,1}(\omega)=\lambda\omega
\end{align*}
Actually, static metric is Hermitian-symplectic when $\lambda\neq0$.
Although Tian and Streets\cite{PF} have proven non-K{\"a}hler surfaces does not admit Hermitian-symplectic structures using the classification of compact complex surfaces.
But it is still not clear for high dimensional cases.

Now we can have another approach to study the Hermitian-symplectic structures, which is to flow them along \eqref{HSF}.
In the case of compact complex surfaces, we prove that the function $(\varphi(t),\varphi(t))_{\omega(t)}$ is monotonically decreasing.
As a corollary, we show again that non-K{\"a}hler surfaces do not exist static metric with $\lambda<0$ without help of classification of compact complex surfaces.

Here is an outline of the rest of this paper.
In section 2 we recall some basic notions about pluriclosed flow and Hermitian-symplectic structures.
In section 3 we show that pluriclosed flow preserves Hermitian-symplectic structures and study the flow \eqref{HSF}.
In section 4 we give a topological obstruction to global solutions using the function $\mathcal{V}(t)$ defined above.
Finally in section 5 we do some discussion in surfaces case.
\newline

\textbf{Acknowledgements.}
I would like to express my sincere gratitude to my advisor Professor Gang Tian, for his helpful suggestions and patient guidance.
And thanks to Professor Jeffrey Streets for his helpful and detailed comments.
\section{Preliminary}

In this section, we review some notions about pluriclosed flow and Hermitian-symplectic structures.

\subsection{Pluriclosed flow}
Consider a Hermitian manifold $(M^{2n},J,g)$.
Bismut\cite{Bismut} shows that there exists a unique real connection $\nabla$ compatible with complex structure and Hermitian metric such that its torsion tensor is totally skew-symmetric.
That means
\begin{equation*}
\nabla J=\nabla g=0
\end{equation*}
and
\begin{equation*}
H(X,Y,Z)=-H(Y,X,Z)=-H(X,Z,Y).
\end{equation*}
Here
\begin{equation*} 
H(X,Y,Z)=g(\nabla_{X}Y-\nabla_{Y}X-[X,Y],Z)
\end{equation*}
is the tensor induced by torsion operator.

We denote $\omega$ the fundamental form corresponding to $g$.
A metric is pluriclosed if its fundamental form is pluriclosed, i.e. $\partial\bar{\partial}\omega=0$.
The torsion tensor $H$ corresponding to Bismut connection is a real 3-form and we have the formula $H=-d^{c}\omega=d\omega(J\cdot,J\cdot,J\cdot)$, in which $d^{c}=\sqrt{-1}(\bar{\partial}-\partial)$ is the conjugate differential operator.
Then a metric $\omega$ is pluriclosed if and only if its Bismut torsion tensor is closed for $-dd^{c}\omega=-2\sqrt{-1}\partial\bar{\partial}\omega$.
Gauduchon\cite{Gauduchon} shows that every complex surface admits pluriclosed metric.\\
We denote the Riemann operator corresponding to Bismut connection by
\begin{equation*}
R(X,Y)Z=\nabla_{X}\nabla_{Y}Z-\nabla_{Y}\nabla_{X}Z-\nabla_{[X,Y]}Z.
\end{equation*}
For Bismut connection, we can also define a Ricci-type form by
\begin{equation*}
\rho(X,Y)=\sum_{i=1}^{n}g(R(X,Y)e_{i},Je_{i})
=\sqrt{-1}\sum_{i=1}^{n}g(R(X,Y)Z_{i},\bar{Z}_{i}).
\end{equation*}
Here, $\{e_i,Je_i\}$ is any local orthonormal real basis.
And $\{Z_i\}$ and $\{\bar{Z}_i\}$ are unitary bases determined by
\begin{equation*}
Z_i=\frac{\sqrt{2}}{2}(e_i-\sqrt{-1}Je_i),\quad \bar{Z}_i=\frac{\sqrt{2}}{2}(e_i+\sqrt{-1}Je_i).
\end{equation*}
It is easy to check that the definition of $\rho$ is independent of choice of basis.

Now, we are in a position to define pluriclosed flow.
\begin{equation*}
\left\{
\begin{aligned}
&\frac{\partial}{\partial t}\omega(t)=-\rho^{1,1}(\omega(t))
\\
&\omega(0)=\omega_0.
\end{aligned}
\right.
\end{equation*}
Here, $\rho^{1,1}$ is the (1,1)-part of Ricci form, that is
\begin{equation*}
\rho^{1,1}(X,Y)=\frac{1}{2}\left(\rho(X,Y)+\rho(JX,JY)\right).
\end{equation*}
Tian and Streets\cite{PF} show that this flow preserves pluriclosed condition.
That means the solution is pluriclosed at any time if the initial metric is pluriclosed.
Meanwhile, they\cite{PF,HCF,Regularity} prove it is a strictly parabolic systems under the pluriclosed assumption and give the short time existence and some basic regularity results.
More results about regularity and long-time existence can be found in \cite{OtherR2, OtherR3, OtherR1, StreetsMoreResults2, StreetsMoreResults1}.
\begin{remark}
In local coordinates, we can express pluriclosed flow by Hodge operator as
\begin{equation}
\left\{
\begin{aligned}
&\frac{\partial}{\partial t}\omega=\partial\partial^{*}\omega
+\bar{\partial}\bar{\partial}^{*}\omega
+\sqrt{-1}\partial\bar{\partial}\log\det g
\\
&\omega(0)=\omega_0.
\end{aligned}
\right.
\label{Eq:pluriclosedFlowHodge}
\end{equation}
Here, $\partial^{*}$ is the dual operator of $\partial$ corresponding to the inner product induced by metric $\omega(t)$.
\end{remark}

\subsection{Hermitian-symplectic structure}
In this subsection, we will review the Hermitian-symplectic structure introduced by Tian and Streets\cite{PF}.\\
A Hermitian-symplectic form on a complex manifold $(M^{2n},J)$ is a real symplectic form  such that its (1,1)-part is positive.
In other words, a Hermitian-symplectic form is a real 2-form $\Omega$ such that $d\Omega=0$ and in local coordinates $\{z_i\}$,
\begin{equation*}
\Omega^{1,1}=\sqrt{-1}g_{i\bar{j}}dz^{i}d\bar{z}^{j}.
\end{equation*}
in which $(g_{i\bar{j}})_{n\times n}$ is a positive definite Hermitian matrix.
Thus, the (1,1)-part of a Hermitian-symplectic form is the fundamental form corresponding to some metric.

A K{\"a}hler form is naturally a Hermitian-symplectic form.
Tian and Streets\cite{PF} prove that a complex surface admits Hermitian-symplectic structures if and only if it is K{\"a}hler. But it is not clear if is it still true for dimension higher than two.

For $\Omega$ is real and closed, we have
\begin{equation*}
\left\{
\begin{aligned}
&\bar{\partial}\Omega^{2,0}+\partial\Omega^{1,1}=0
\\
&\partial\Omega^{2,0}=0.
\end{aligned}
\right.
\end{equation*}
Here, $\Omega=\Omega^{2,0}+\Omega^{1,1}+\overline{\Omega^{2,0}}$ and $\Omega^{1,1}$ are real forms.
Applying $\bar{\partial}$ to the first equation, we get the fact that the (1,1)-part of a Hermitian-symplectic form is pluriclosed.
\\
On the other hand, given a pluriclosed metric $\omega$, consider the equations with respect to (2,0)-form $\varphi$
\begin{equation}
\left\{
\begin{aligned}
&\bar{\partial}\varphi+\partial\omega=0
\\
&\partial\varphi=0.
\end{aligned}
\right.
\label{Eq:existenceOfHS}
\end{equation}
If a (2,0)-form $\varphi$ is a solution, then $\widetilde{\omega}\triangleq\varphi+\omega+\overline{\varphi}$ is a Hermitian-symplectic form.
And in this case we call the pluriclosed form $\omega$ can be extended to a Hermitian-symplectic form.
Notice that if equations (\ref{Eq:existenceOfHS}) has a solution $\varphi$, then $\varphi+\phi$ is also a solution for any closed $(2,0)$-form $\phi$.

\section{A parabolic flow of Hermitian-symplectic forms}

In this section, we first directly prove that pluriclosed flow preserves Hermitian-symplectic structures.
Then we study the flow of Hermitian-symplectic forms \eqref{HSF}.
We will show this flow is parabolic.
And by the standard theory of parabolic equations, we get the short-time existence and uniqueness of solutions.
As a corolarry, we prove again that pluriclosed flow preserves Hermitian-symplectic structures.
Meanwhile, we will see that if the initial Hermitian-symplectic form is degenerate, i.e. its (1,1)-part is K{\"a}hler, then the flow of (1,1)-part will degenerate to K{\"a}hler-Ricci flow and the (2,0)-part is constant.

Firstly, we prove the next theorem.
\begin{theorem}
Pluriclosed flow preserves Hermitian-symplectic structures.
\end{theorem}
\begin{proof}
Assume the solution $\omega(t)$ with Hermitian-symplectic initial data exists on $[0,T)$.
Applying $\dd$ to the first equation of \eqref{Eq:pluriclosedFlowHodge}, we get a evolution equation of $\dd\omega$
\begin{align*}
\frac{\dd}{\dd t}\dd\omega
=\dd\db\db^*\omega.
\end{align*}
And we will use it to find the $(2,0)$-part $\varphi(t)$ satisfying $\partial\omega(t)=-\bar{\partial}\varphi(t)$ for all $t\in[0,T)$.
\\
Choose a fixed background metric $h$ and denote $(\cdot,\cdot)_h$ the induced inner product on space of differential forms.

For a test $(2,1)$-form $\eta$, derivative $(\partial\omega(t),\eta)_h$ with respect to $t$
\begin{align*}
\frac{d}{dt}(\partial\omega(t),\eta)_h
&=(\frac{\partial}{\partial t}\partial\omega(t),\eta)_h
\\
&=(\partial\bar{\partial}\bar{\partial}^{*_{t}}
\omega(t),\eta)_h
\\
&=(-\bar{\partial}\partial\bar{\partial}^{*_{t}}
\omega(t),\eta)_h
\\
&=(-\partial\bar{\partial}^{*_{t}}\omega(t),\bar{\partial}^{*_{h}}\eta)_h
\end{align*}
By Newton-Leibniz formula,
\begin{align*}
(\partial\omega(t),\eta)_h-(\partial\omega(0),\eta)_h
&=\int_{0}^{t}\frac{d}{ds}
(\partial\omega(s),\eta)_h ds
\\
&=\int_{0}^{t}(-\partial\bar{\partial}^{*_{s}}\omega(s),\bar{\partial}^{*_{h}}\eta)_h ds
\\
&=-(\int_{0}^{t}\partial\bar{\partial}^{*_{s}}
\omega(s) ds,\bar{\partial}^{*_{h}}\eta)_h.
\end{align*}
This last equal is because the order of integration can be exchanged.

Note $\omega_0$ can extend to a Hermitian-symplectic form.
Thus there is a $(2,0)$-form $\varphi_0$ satisfying $\partial\omega_0=-\bar{\partial}\varphi_0$ and $\partial\phi_0=0$.
\\
Then we have
\begin{align*}
(\partial\omega(t),\eta)_0
=-(\varphi_0
+\int_{0}^{t}\partial\bar{\partial}^{*_{s}}
\omega(s) ds,\bar{\partial}^{*_0}\eta)_0.
\end{align*}
Thus, we can define $\varphi(t)$ as
\begin{align*}
\varphi(t)
=\varphi_0
+\int_{0}^{t}\partial\bar{\partial}^{*_{s}}
\omega(s) ds.
\end{align*}
So $\partial\omega(t)=-\bar{\partial}\varphi(t)$ and the only thing is to show $\partial\varphi(t)=0$.
It is easy to check by noticing $\bar{\partial}\varphi(0)=\bar{\partial}\varphi_0=0$.
Thus we complete our proof.
\end{proof}
\begin{remark}\label{Rm:pluriclosedFlowBCzero}
From the proof above we know that if $\varphi_0$ is $\partial$-exact, i.e. there exists a $(1,0)$-form $\alpha_0$ such that $\partial\alpha_0=\varphi_0$.
Then $\varphi(t)$ is $\partial$-exact and actually we can choose $\alpha(t)$ as
\begin{align*}
\alpha(t)
=\alpha_0
+\int_{0}^{t}\bar{\partial}^{*_{s}}\omega(s)ds.
\end{align*}
Meanwhile, we have $\partial\omega(t)=\partial\bar{\partial}\alpha(t)$.
In fact, it is the case when $[\partial\omega_0]=0\in\text{H}^{2,1}_{BC}(M,\mathbb{C})$.
Here $\text{H}^{2,1}_{BC}(M,\mathbb{C})$ is the Bott-Chern cohomology group.
\end{remark}

By an observation, we find that the evolution equation of $(2,0)$-part is exactly determined by the $(2,0)$-part of Bismut-Ricci form.
Thus, we can actually regard pluriclosed flow as a flow of Hermitian-symplectic forms by adding the extra evolution equation of $(2,0)$-part.

\begin{definition}
Let  $\Omega_0=\varphi_0+\omega_0+\bar{\varphi}_0$
be a Hermitian-symplectic form, we can define a flow with initial data $\Omega_0$ as
\begin{equation}
\label{Eq:H-SFlow}
\left\{
\begin{aligned}
&\frac{\partial}{\partial t}\omega
=\partial\partial^*\omega
+\bar{\partial}\bar{\partial}^*\omega
+\sqrt{-1}\partial\bar{\partial}\log\det g
\\
&\frac{\partial}{\partial t}\varphi
=-\partial\text{\rm tr}_{g}(\bar{\partial}\varphi)
\\
&\omega(0)=\omega_0,\quad \varphi(0)=\varphi_0.
\end{aligned}
\right.
\end{equation}
in which, $\text{\rm tr}_{g}(\beta)\triangleq\sqrt{-1}\Lambda_{\omega}\beta$ for arbitrary differential forms $\beta$.
And $\Lambda_{\omega}$ is the adjoint operator of Lefschetz operator $L_{\omega}:\beta\mapsto\omega\wedge\beta$
\end{definition}

We have a few things to say about this definition.
Recall that $\Omega_0$ is a Hermitian-symplectic form, so its (1,1)-part $\omega_0$ is pluriclosed and we denote $g_0$ the metric corresponding to $\omega_0$.
The first equation is just the pluriclosed flow with initial metric $\omega_0$, thus $\omega(t)$ exists uniquely for at least a short time.
So the evolution equation of $\varphi$ is well-defined when $g(t)$ exists.

Then we show $\varphi(t)$ exists uniquely in short time.
That is the lemma below.

\begin{lemma}\label{Lemma:(p,0)flow}
Given a smooth one-parameter family of metrics $g(t)$ in time interval $[0,T)$.
Consider the flow of (p,0)-forms $\alpha$ with initial data $\alpha_0$ satisfying $\partial\alpha_0=0$
\begin{equation*}
\left\{
\begin{aligned}
&\frac{\partial}{\partial t}\alpha
=-\partial\text{\rm tr}_{g}(\bar{\partial}\alpha)
\\
&\alpha(0)=\alpha_0
\end{aligned}
\right.
\end{equation*}
Then, there exists a unique $\partial$-closed solution $\alpha(t)$ in $[0,\varepsilon)$, in which $\varepsilon<T$ is a positive number depending on $\alpha_0$.
\end{lemma}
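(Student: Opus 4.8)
The plan is to treat the system as a linear equation for $\alpha$ with prescribed, smoothly time-dependent coefficients (entering through $g(t)$), and to produce the solution by solving a strictly parabolic modification that agrees with the given flow on $\partial$-closed forms; throughout I assume $M$ compact, so the relevant operators are elliptic in the usual sense. The first observation is that the constraint $\partial\alpha=0$ is automatically propagated: since the right-hand side $-\partial\operatorname{tr}_g(\bar\partial\alpha)$ is manifestly $\partial$-exact, any solution satisfies $\frac{\partial}{\partial t}\partial\alpha=-\partial\partial\operatorname{tr}_g(\bar\partial\alpha)=0$, hence $\partial\alpha(t)=\partial\alpha_0=0$ for all $t$. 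Thus $\partial$-closedness is not an extra condition to be imposed but a conserved quantity, and the real content of the lemma is short-time existence and uniqueness.

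Next I would analyze the principal symbol of the spatial operator $A\colon\alpha\mapsto\partial\operatorname{tr}_g(\bar\partial\alpha)$. Writing a real covector as $\xi=\zeta+\bar\zeta$ with $\zeta$ its $(1,0)$-part and $\zeta^{\sharp}$ the $(1,0)$-vector metrically dual to $\zeta$, one uses that $\Lambda\alpha=0$ on a $(p,0)$-form, so only the $d\bar z$-slot of the symbol of $\bar\partial$ survives the contraction by $\Lambda$; this gives $\sigma(\operatorname{tr}_g\bar\partial)(\xi)\alpha\propto\iota_{\zeta^{\sharp}}\alpha$, and wedging by the symbol of $\partial$ yields $\sigma(A)(\xi)\alpha=\zeta\wedge\iota_{\zeta^{\sharp}}\alpha$ up to a positive constant. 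This is exactly the principal symbol of $\partial\partial^{*}$, and since $\langle\zeta\wedge\iota_{\zeta^{\sharp}}\alpha,\alpha\rangle=|\iota_{\zeta^{\sharp}}\alpha|^{2}$, it is nonnegative but degenerate, vanishing on $(p,0)$-forms with no $\zeta$-component. Consequently $\frac{\partial}{\partial t}\alpha=-A\alpha$ is only weakly parabolic, and this degeneracy is the main obstacle: the standard theory of strictly parabolic equations does not apply directly.

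I would remove the degeneracy in the spirit of the DeTurck trick. Because $\sigma(A)=\sigma(\partial\partial^{*})$, we may write $A=\partial\partial^{*}+P$ with $P$ of order at most one, and consider the modified flow $\frac{\partial}{\partial t}\alpha=-A\alpha-\partial^{*}\partial\alpha=-(\partial\partial^{*}+\partial^{*}\partial)\alpha-P\alpha$. Its spatial operator is the $\partial$-Laplacian plus lower-order terms, with principal symbol $|\zeta|^{2}\,\mathrm{Id}$, so it is strongly elliptic and the modified flow is strictly parabolic with smooth coefficients on each $[0,T']$, $T'<T$. Linear parabolic theory on the compact manifold $M$ then yields a unique smooth solution $\alpha(t)$, in particular on some $[0,\varepsilon)$.

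Finally I would close the loop. Setting $u=\partial\alpha$ and applying $\partial$ to the modified flow, the terms $\partial\partial\partial^{*}\alpha$ and $\partial P\alpha$ vanish (the latter because $P\alpha$ is itself $\partial$-exact), leaving $\frac{\partial}{\partial t}u=-\partial\partial^{*}u$ with $u(0)=\partial\alpha_0=0$; differentiating $\|u\|_{g(t)}^{2}$, the dissipative term $-\|\partial^{*}u\|^{2}\le 0$ together with bounded $\dot g$-contributions gives $\frac{d}{dt}\|u\|^{2}\le C\|u\|^{2}$, so Gronwall forces $u\equiv 0$. Hence the modified solution is $\partial$-closed, the added term $\partial^{*}\partial\alpha$ vanishes identically, and $\alpha(t)$ solves the original flow. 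Uniqueness is immediate: any solution of the original flow is $\partial$-closed by the first paragraph, hence satisfies $\partial^{*}\partial\alpha=0$ and therefore also solves the strictly parabolic modified flow, whose solution is unique. The only genuinely delicate point is diagnosing the symbol degeneracy in the second step; once that is understood, the modification together with the conservation of $\partial$-closedness makes the remaining arguments routine.
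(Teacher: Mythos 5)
Your proof is correct, and it takes a genuinely different route from the paper's --- in fact a more complete one. The paper restricts the operator $\Phi=-\partial\operatorname{tr}_g(\bar\partial\,\cdot\,)$ to the subspace $\mathcal{A}^{p,0}_{\partial}$ of $\partial$-closed forms and computes in local coordinates that, once the constraint $\partial\alpha=0$ is substituted into the second-order terms, the principal part becomes the componentwise Laplacian $g^{\bar{t}s}\partial_s\partial_{\bar{t}}$; it then invokes ``the standard theorem of parabolic equations.'' This is the same degeneracy diagnosis as yours (the unconstrained symbol is $\zeta\wedge\iota_{\zeta^{\sharp}}$, that of $\partial\partial^{*}$, and it acts as $|\zeta|^{2}\,\mathrm{Id}$ exactly on the symbol-level constraint space $\ker(\zeta\wedge)=\mathrm{im}(\zeta\wedge)$), but the paper's final appeal is informal: $\mathcal{A}^{p,0}_{\partial}$ is not the space of sections of a vector bundle, so parabolic existence theory does not literally apply to a flow confined to such a subspace. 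Your DeTurck-type scheme supplies precisely the missing bridge: solve the strictly parabolic modified flow with spatial operator $-(\partial\partial^{*}+\partial^{*}\partial)-P$ on all $(p,0)$-forms, propagate the constraint ($u=\partial\alpha$ obeys $\partial_t u=-\partial\partial^{*}u$ since $A\alpha$ and $P\alpha$ are $\partial$-exact, and Gronwall with the bounded $\dot g$-terms forces $u\equiv0$), and recover uniqueness for the original flow because every solution of it is automatically $\partial$-closed, hence also solves the modified flow. Two small points to pin down: your decomposition $A=\partial\partial^{*}+P$ with $P$ of order at most one requires the two principal symbols to agree with constant exactly $1$, which does hold under the paper's convention $\operatorname{tr}_g=\sqrt{-1}\Lambda_{\omega}$ (on $(p,0)$-forms one has $\sqrt{-1}\Lambda_{\omega}\bar\partial\alpha=\partial^{*}\alpha$ plus a zeroth-order torsion term --- this is the paper's own coordinate computation read off differently; if the constant were $c\neq1$ you would simply add $c\,\partial^{*}\partial$ instead); and compactness of $M$, which you assume and the paper leaves implicit. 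What the paper's route buys is brevity; what yours buys is a rigorous reduction to standard linear parabolic theory, together with the stronger observation that $\partial$-closedness is conserved for arbitrary solutions rather than imposed as a side condition.
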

\begin{proof}
We denote $\mathcal{A}^{p,q}$ the sheaf of differential $(p,q)$-forms and $\mathcal{A}^{p,q}_{\partial}=\{\alpha\in\mathcal{A}^{p,q}|\partial\alpha=0\}$ the set of $\partial$-closed $(p,q)$-forms.
And notice the differential operator
\begin{align*}
\Phi=-\partial\text{tr}_{g}(\bar{\partial}\cdot)
\Big|_{\mathcal{A}_{\partial}^{p,0}}
:\mathcal{A}_{\partial}^{p,0}
\to
\mathcal{A}_{\partial}^{p,0}
\end{align*}
is a linear differential operator from $\mathcal{A}_{\partial}^{p,0}$ to itself.

We claim that $\Phi$ is actually elliptic.

In local complex coordinates, we assume that
\begin{align*}
\alpha
&=\sum_{i_1<\cdots<i_p}
\alpha_{i_1\cdots i_p}
dz^{i_1}\wedge\cdots\wedge dz^{i_p}
\\
&=\frac{1}{p!}
\alpha_{i_1\cdots i_p}
dz^{i_1}\wedge\cdots\wedge dz^{i_p}
\end{align*}
where the subscripts are antisymmetric.
By direct computation, we have
\begin{align*}
\Phi(\alpha)
&=-\partial\text{tr}_{g}(\bar{\partial}\alpha)
\\
&=-\partial\text{tr}_{g}
(
\frac{1}{p!}
\alpha_{i_1\cdots i_p,\bar{t}}
d\bar{z}^t\wedge
dz^{i_1}\wedge\cdots\wedge dz^{i_p}
)
\\
&=\frac{1}{(p-1)!}\partial
(
g^{\bar{t}s}\alpha_{si_{2}\cdots i_{p},\bar{t}}
dz^{i_2}\wedge\cdots\wedge dz^{i_p}
)
\\
&=\frac{1}{(p-1)!}
(g^{\bar{t}s}\alpha_{si_{2}\cdots i_{p},\bar{t}})_{,i_1}
dz^{i_1}\wedge\cdots\wedge dz^{i_p}
\end{align*}
The terms containing second order derivatives with respect to $\alpha$ are
\begin{align*}
\text{2nd order}
&=\frac{1}{(p-1)!}g^{\bar{t}s}
\alpha_{i_{1}\cdots i_{p-1}s,i_p\bar{t}}
dz^{i_1}\wedge\cdots\wedge dz^{i_p}
\\
&=\sum_{i_1<\cdots<i_p}
g^{\bar{t}s}
\sum_{a=1}^{p}(-1)^{p-a}
\alpha_{i_{1}\cdots\widehat{i_{a}}\cdots i_{p}s,i_a\bar{t}}
dz^{i_1}\wedge\cdots\wedge dz^{i_p}
\\
&=\sum_{i_1<\cdots<i_p}
g^{\bar{t}s}
\alpha_{i_{1}\cdots i_{p},s\bar{t}}
dz^{i_1}\wedge\cdots\wedge dz^{i_p}
\end{align*}
The last line uses the condition $\partial\alpha=0$, which is
\begin{align*}
\alpha_{i_{1}\cdots i_{p},s}
-\sum_{a=1}^{p}(-1)^{p-a}
\alpha_{i_{1}\cdots\widehat{i_a}\cdots i_{p}s,i_a}
=0
\end{align*}
in local coordinates.
So we prove $\Phi$ is elliptic.
Then, by the standard theorem of parabolic equations, we get the short-time existence and uniqueness of solutions.
\end{proof}
Applying Lemma \ref{Lemma:(p,0)flow} in the case of $p=2$ and note that $\varphi_0$ is $\partial$-closed for $\Omega_0$ is closed.
So we get the short-time existence and uniqueness of $\varphi(t)$.

To show the flow (\ref{Eq:H-SFlow}) preserves Hermitian-symplectic structures, we need the next lemma.
\begin{lemma}\label{Lemma:(p,q)Flow}
Given a smooth one-parameter family of metrics $g(t)$ in time interval $[0,T)$.
Consider the flow of $(p,q)$-form $\phi$ with initial data $\phi_0$ satisfying $d\phi_0=0$
\begin{equation*}
\left\{
\begin{aligned}
&\frac{\partial}{\partial t}\phi
=\partial\bar{\partial}
\text{\rm tr}_{g}(\phi)
\\
&\phi(0)=\phi_0
\end{aligned}
\right.
\end{equation*}
Then, there exists a unique closed solution $\phi(t)$ in $[0,\varepsilon)$, in which $\varepsilon<T$ is a positive number depending on $\phi_0$.
\end{lemma}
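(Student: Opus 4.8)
The plan is to mirror the structure of the proof of Lemma \ref{Lemma:(p,0)flow}, since the two lemmas are formally dual: there we flowed $\partial$-closed $(p,0)$-forms by $-\partial\,\text{\rm tr}_g(\bar\partial\,\cdot)$ and found the operator elliptic on the subspace $\mathcal{A}^{p,0}_\partial$; here we flow $d$-closed $(p,q)$-forms by $\partial\bar\partial\,\text{\rm tr}_g(\cdot)$ and want the analogous conclusion on the subspace $\mathcal{A}^{p,q}_d=\{\phi\in\mathcal{A}^{p,q}\mid d\phi=0\}$. First I would verify that the flow is well-defined on this subspace, i.e. that $\Psi\triangleq\partial\bar\partial\,\text{\rm tr}_g(\cdot)$ preserves $d$-closedness: if $\phi$ is closed then $\Psi(\phi)=\partial\bar\partial\,\text{\rm tr}_g(\phi)$ is $\partial$-exact and $\bar\partial$-exact, hence visibly $\partial$-closed and $\bar\partial$-closed, so $d\Psi(\phi)=0$ and the evolution keeps us inside $\mathcal{A}^{p,q}_d$. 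Thus
\begin{align*}
\Psi=\partial\bar\partial\,\text{\rm tr}_g(\cdot)\Big|_{\mathcal{A}^{p,q}_d}
:\mathcal{A}^{p,q}_d\to\mathcal{A}^{p,q}_d
\end{align*}
is a linear differential operator from $\mathcal{A}^{p,q}_d$ to itself.

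The core step is to show $\Psi$ is elliptic when restricted to $\mathcal{A}^{p,q}_d$. I would compute its symbol exactly as in Lemma \ref{Lemma:(p,0)flow}: write $\phi$ in local coordinates with antisymmetric coefficients $\phi_{I\bar J}$, apply $\text{\rm tr}_g=\sqrt{-1}\Lambda_\omega$ to contract one holomorphic and one antiholomorphic index against $g^{\bar t s}$, and then extract the second-order terms from $\partial\bar\partial$ of the result. The raw second-order part will be a Laplacian-type term $g^{\bar t s}\phi_{\ldots,s\bar t}$ plus unwanted off-diagonal contractions coming from the index gymnastics of $\partial\bar\partial$; the closedness hypothesis $d\phi=0$, which splits as $\partial\phi=0$ and $\bar\partial\phi=0$, supplies exactly the Bianchi-type identities needed to absorb those off-diagonal terms into the principal Laplacian, just as the single condition $\partial\alpha=0$ did in the $(p,0)$ case. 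After these substitutions the second-order part should reduce to a positive-definite contraction $g^{\bar t s}\phi_{I\bar J,s\bar t}\,dz^I\wedge d\bar z^J$, whose symbol $\sigma(\Psi)(\xi)=|\xi|_g^2\,\mathrm{Id}$ on $\mathcal{A}^{p,q}_d$ is an isomorphism for $\xi\neq0$, establishing ellipticity. Once ellipticity on the constrained space is in hand, the short-time existence and uniqueness of a closed solution $\phi(t)$ on some $[0,\varepsilon)$ follows from the standard parabolic theory for linear elliptic operators, precisely as invoked at the end of Lemma \ref{Lemma:(p,0)flow}.

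The main obstacle I anticipate is the symbol computation on the constrained subspace: unlike the $(p,0)$ situation, where $\bar\partial\alpha$ produces a single antiholomorphic index and the contraction is clean, here both $\partial$ and $\bar\partial$ act on a form already carrying $q$ antiholomorphic legs, so the second-order terms proliferate and one must carefully track signs and which index pairs survive the antisymmetrization. The delicate part is confirming that the full system $\partial\phi=0$ together with $\bar\partial\phi=0$ is sufficient to cancel every stray second-order term and leave only the scalar Laplacian; it is conceivable that $\Psi$ fails to be elliptic on all of $\mathcal{A}^{p,q}$ and becomes elliptic only after restricting to $\mathcal{A}^{p,q}_d$, which is why the closedness constraint must be used in the symbol calculation rather than merely to keep the flow inside the subspace. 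A secondary technical point is ensuring that the solution produced by parabolic theory, which a priori lives in $\mathcal{A}^{p,q}_d$, remains genuinely $d$-closed for positive time; this is guaranteed by the invariance of $\mathcal{A}^{p,q}_d$ under $\Psi$ established in the first step, so uniqueness within the constrained space forces the solution to stay closed.
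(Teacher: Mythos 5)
Your proposal follows essentially the same route as the paper's proof: restrict $\Psi=\partial\bar{\partial}\,\mathrm{tr}_{g}(\cdot)$ to $\mathcal{A}^{p,q}_{d}$, establish ellipticity by a local-coordinate computation in which the two halves $\partial\phi=0$ and $\bar{\partial}\phi=0$ of the closedness hypothesis are used to reduce the second-order terms to the Laplacian-type contraction $g^{\bar{t}s}\phi_{I\bar{J},s\bar{t}}$, and then invoke standard parabolic theory for short-time existence and uniqueness. Your preliminary observation that $\Psi(\phi)$ is automatically $d$-closed (being both $\partial$- and $\bar{\partial}$-exact) is a point the paper leaves implicit, but otherwise the argument coincides with the paper's.
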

\begin{proof}
This proof is similar with Lemma \ref{Lemma:(p,0)flow}.
Denote $\mathcal{A}^{p,q}_{d}=\{\phi\in\mathcal{A}^{p,q}|d\phi=0\}$ the set of closed $(p,q)$-forms.
And consider the operator
\begin{align*}
\Psi=\partial\bar{\partial}\text{tr}_{g}(\cdot)
\Big|_{\mathcal{A}^{p,q}_{d}}
:\mathcal{A}^{p,q}_{d}\to\mathcal{A}^{p,q}_{d}
\end{align*}
from $\mathcal{A}^{p,q}_{d}$ to itself.

We claim that $\Psi$ is a linear elliptic operator.

In local complex coordinates, we assume
\begin{align*}
\phi
&=\sum_{\substack{i_1<\cdots<i_r\\j_1<\cdots<j_s}}
\phi_{i_1\cdots i_r\bar{j_1}\cdots\bar{j_s}}
dz^{i_1}\wedge\cdots\wedge dz^{i_r}
\wedge
d\bar{z}^{j_1}\wedge\cdots\wedge d\bar{z}^{j_s}
\\
&=\frac{1}{r!s!}
\phi_{i_1\cdots i_r\bar{j_1}\cdots\bar{j_s}}
dz^{i_1}\wedge\cdots\wedge dz^{i_r}
\wedge
d\bar{z}^{j_1}\wedge\cdots\wedge d\bar{z}^{j_s}
\end{align*}
where the subscripts are antisymmetric. 
Then
\begin{align*}
\text{tr}_{g}(\phi)
&=\frac{1}{(r-1)!(s-1)!}
g^{\bar{q}p}
\phi_{i_1\cdots i_{r-1}p\bar{q}\bar{j_2}\cdots\bar{j_s}}
dz^{i_1}\wedge\cdots\wedge dz^{i_{r-1}}
\wedge
d\bar{z}^{j_2}\wedge\cdots\wedge d\bar{z}^{j_s}
\end{align*}
And consider the second order term of $\partial\bar{\partial}\text{tr}_{g}(\phi)$
\begin{align*}
\text{2nd order}
&=\frac{1}{(r-1)!(s-1)!}
g^{\bar{q}p}
\phi_{i_1\cdots i_{r-1}p\bar{q}\bar{j_2}\cdots\bar{j_s},i_r\bar{j_1}}
dz^{i_1}\wedge\cdots\wedge dz^{i_{r}}
\wedge
d\bar{z}^{j_1}\wedge\cdots\wedge d\bar{z}^{j_s}
\\
&=\frac{1}{(s-1)!}
\sum_{i_1<\cdots<i_r}
g^{\bar{q}p}
\sum_{a=1}^{r}(-1)^{r-a}
\phi_{i_1\cdots\widehat{i_{a}}\cdots i_{r-1}p
\bar{q}\bar{j_2}\cdots\bar{j_s},i_{a}\bar{j_1}}
\\
&\cdot dz^{i_1}\wedge\cdots\wedge dz^{i_{r}}
\wedge
d\bar{z}^{j_1}\wedge\cdots\wedge d\bar{z}^{j_s}
\\
&=\frac{1}{(s-1)!}
\sum_{i_1<\cdots<i_r}
g^{\bar{q}p}
\phi_{i_1\cdots i_{r}
\bar{q}\bar{j_2}\cdots\bar{j_s},p\bar{j_1}}
dz^{i_1}\wedge\cdots\wedge dz^{i_{r}}
\wedge
d\bar{z}^{j_1}\wedge\cdots\wedge d\bar{z}^{j_s}
\end{align*}
The last line is because $\partial\phi=0$, which if and only if
\begin{align*}
\phi_{i_1\cdots i_{r}
\bar{j_1}\cdots\bar{j_s},p}
-\sum_{a=1}^{r}(-1)^{r-a}
\phi_{i_1\cdots\widehat{i_a}\cdots i_{r}p
\bar{j_1}\cdots\bar{j_s},i_a}
=0.
\end{align*}
Similarly, using the fact $\bar{\partial}\phi=0$, the terms of second order are
\begin{align*}
\text{2nd order}
&=\frac{1}{(s-1)!}\sum_{i_1<\cdots<i_r}
g^{\bar{q}p}
\phi_{i_1\cdots i_{r}
\bar{j_2}\cdots\bar{j_s}\bar{q},p\bar{j_1}}
dz^{i_1}\wedge\cdots\wedge dz^{i_{r}}
\wedge
d\bar{z}^{j_2}\wedge\cdots\wedge d\bar{z}^{j_s}
\wedge d\bar{z}^{j_1}
\\
&=\sum_{\substack{i_1<\cdots<i_r\\j_1<\cdots<j_s}}
g^{\bar{q}p}
\phi_{i_1\cdots i_{r}
\bar{j_1}\cdots\bar{j_s},p\bar{q}}
dz^{i_1}\wedge\cdots\wedge dz^{i_{r}}
\wedge
d\bar{z}^{j_1}\wedge\cdots\wedge d\bar{z}^{j_s}
\end{align*}
Thus $\Psi$ is elliptic and we complete our proof.
\end{proof}

Now we show that the Hermitian-symplectic structures are preserved by flow \eqref{Eq:H-SFlow}.
This is why we say it is a flow of Hermitian-symplectic forms.

\begin{theorem}\label{Thm:H-SFlowByPDE}
Hermitian-symplectic structures are preserved by flow \eqref{Eq:H-SFlow}.
\end{theorem}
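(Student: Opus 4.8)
The plan is to show that $\Omega(t)=\varphi(t)+\omega(t)+\bar{\varphi}(t)$ stays closed for as long as the flow exists; positivity of $\omega(t)$ is automatic, since the first equation of \eqref{Eq:H-SFlow} is the pluriclosed flow, which keeps $\omega(t)$ positive and pluriclosed on its interval of existence. Decomposing $d\Omega$ by bidegree, the identity $d\Omega(t)=0$ is equivalent to the two conditions
\begin{align*}
\partial\varphi(t)=0,\qquad \psi(t):=\bar{\partial}\varphi(t)+\partial\omega(t)=0,
\end{align*}
the $(1,2)$- and $(0,3)$-components being their conjugates. The first is immediate: applying Lemma \ref{Lemma:(p,0)flow} with $p=2$, the solution $\varphi(t)$ of the second equation of \eqref{Eq:H-SFlow} is $\partial$-closed, and $\partial\varphi_0=0$ because $\Omega_0$ is Hermitian-symplectic. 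Thus everything reduces to proving $\psi(t)\equiv0$.

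First I would check that $\psi(t)$ is $d$-closed at every time: $\partial\psi=\partial\bar{\partial}\varphi=-\bar{\partial}\partial\varphi=0$ and $\bar{\partial}\psi=\bar{\partial}\partial\omega=-\partial\bar{\partial}\omega=0$, using $\partial\varphi=0$ and that the pluriclosed flow preserves $\partial\bar{\partial}\omega=0$. Next I would differentiate $\psi$ in $t$. Commuting $\bar{\partial}$ with $\frac{\partial}{\partial t}$ and inserting the $\varphi$-equation gives $\frac{\partial}{\partial t}\bar{\partial}\varphi=\partial\bar{\partial}\,\text{\rm tr}_{g}(\bar{\partial}\varphi)$, while applying $\partial$ to the first equation of \eqref{Eq:H-SFlow} kills the $\partial\partial^{*}\omega$ and $\sqrt{-1}\partial\bar{\partial}\log\det g$ terms and leaves $\frac{\partial}{\partial t}\partial\omega=\partial\bar{\partial}\bar{\partial}^{*}\omega$. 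Adding these,
\begin{align*}
\frac{\partial}{\partial t}\psi
=\partial\bar{\partial}\big(\text{\rm tr}_{g}(\bar{\partial}\varphi)+\bar{\partial}^{*}\omega\big).
\end{align*}

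The goal is then to recognize the right-hand side as the generator $\partial\bar{\partial}\,\text{\rm tr}_{g}(\psi)=\partial\bar{\partial}\big(\text{\rm tr}_{g}(\bar{\partial}\varphi)+\text{\rm tr}_{g}(\partial\omega)\big)$ of the flow in Lemma \ref{Lemma:(p,q)Flow}. This reduces to the Hermitian identity
\begin{align*}
\partial\bar{\partial}\big(\bar{\partial}^{*}\omega-\text{\rm tr}_{g}(\partial\omega)\big)=0,
\end{align*}
which is the heart of the argument and the step I expect to be the main obstacle. I would prove it by a local-coordinate computation: both $\bar{\partial}^{*}\omega$ and $\text{\rm tr}_{g}(\partial\omega)=\sqrt{-1}\Lambda_{\omega}\partial\omega$ are explicit contractions of the first derivatives of $g$, and their difference turns out to be $\partial$ of a locally defined function (the $\det g$ produced by integrating by parts against the volume form yields $\partial\log\det g$); since $\partial\bar{\partial}\partial(\,\cdot\,)=0$, the identity follows, and this is precisely the mechanism that links it to the $\sqrt{-1}\partial\bar{\partial}\log\det g$ term in the flow. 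Granting the identity, $\psi(t)$ is a $d$-closed solution of the flow of Lemma \ref{Lemma:(p,q)Flow} with $\psi(0)=\bar{\partial}\varphi_0+\partial\omega_0=0$; since the zero form is another such solution, uniqueness forces $\psi(t)\equiv0$. Hence $d\Omega(t)=0$ and, $\omega(t)$ being positive, $\Omega(t)$ remains Hermitian-symplectic.
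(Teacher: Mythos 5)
Your proposal is correct and is essentially the paper's own argument: the same bidegree decomposition of $d\Omega=0$, the same use of Lemma \ref{Lemma:(p,0)flow} (with $p=2$) to get $\partial\varphi(t)=0$, and the same reduction of $\bar{\partial}\varphi+\partial\omega=0$ to the uniqueness statement of Lemma \ref{Lemma:(p,q)Flow} — the paper applies uniqueness to the two closed solutions $\partial\omega(t)$ and $-\bar{\partial}\varphi(t)$ with equal initial data, whereas you apply it to their sum $\psi(t)$ with zero initial data, a purely cosmetic difference. The ``main obstacle'' you flag is settled in the paper by the Hodge-star computation with Lemma \ref{Lemma:traceAndHodge}, namely $\bar{\partial}^{*}\omega=-\frac{1}{(n-2)!}*(\omega^{n-2}\wedge\partial\omega)=\text{\rm tr}_{g}(\partial\omega)$, which holds pointwise; so the difference $\bar{\partial}^{*}\omega-\text{\rm tr}_{g}(\partial\omega)$ vanishes identically (it is not merely $\partial$-exact, as your heuristic about a $\partial\log\det g$ term suggests), and with that identity your argument closes exactly as the paper's does.
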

\begin{proof}
Let $\Omega(t)=\varphi(t)+\omega(t)+\bar{\varphi}(t)$ be a solution to flow \eqref{Eq:H-SFlow} with initial data $\Omega_0=\varphi_0+\omega_0+\bar{\varphi}_0$.
Recall the definition of Hermitian-symplectic forms.
The only thing we need to check is that $\Omega(t)$ is closed, which equivalents to equations (\ref{Eq:existenceOfHS}).
Applying Lemma \ref{Lemma:(p,0)flow} in the case of $p=2$, we know that $\varphi(t)$ is $\partial$-closed.
To complete our proof, we need to show $\bar{\partial}\varphi(t)+\partial\omega(t)=0$.

Applying $\partial$ and $\bar{\partial}$ to both sides of the evolution equations of $\omega$ and $\varphi$, respectively, we get
\begin{align}
&\frac{\partial}{\partial t}\partial\omega
=\partial\bar{\partial}\bar{\partial}^*\omega
\label{Eq:torsionFlow-omega}
\\
&\frac{\partial}{\partial t}\bar{\partial}\varphi
=\partial\bar{\partial}\text{tr}_{g}(\bar{\partial}\varphi).
\label{Eq:torsionFlow-varphi}
\end{align}
Firstly, we show that both equations \eqref{Eq:torsionFlow-omega} and \eqref{Eq:torsionFlow-varphi} have the same form.
By direct computation, we have
\begin{align*}
\bar{\partial}^*\omega
&=-*\partial*\omega
\\
&=-\frac{1}{(n-1)!}*\partial(\omega^{n-1})
\\
&=-\frac{1}{(n-2)!}*(\omega^{n-2}\wedge\partial\omega)
\\
&=\text{tr}_{g}(\partial\omega)
\end{align*}
The last line uses Lemma \ref{Lemma:traceAndHodge} in the appendix, which can be proved by direct computation.
Thus equations \eqref{Eq:torsionFlow-omega} and \eqref{Eq:torsionFlow-varphi} both have the form in Lemma \ref{Lemma:(p,q)Flow}
\begin{align}
\frac{\partial}{\partial t}\partial\phi
=\partial\bar{\partial}\text{tr}_{g}(\phi)
\label{Eq:torsionFlow}
\end{align}
for closed $(2,1)$ forms $\phi$.

Note that both $\partial\omega(t)$ and $-\bar{\partial}\varphi(t)$ are solutions to \eqref{Eq:torsionFlow} with initial data $\partial\omega_0$ and $-\bar{\partial}\varphi_0$, respectively.
Meanwhile we have $\partial\omega_0=-\bar{\partial}\varphi_0$ for $\Omega_0$ is a Hermitian-symplectic form.
Applying Lemma \ref{Lemma:(p,q)Flow}, we get $\partial\omega(t)=-\bar{\partial}\varphi(t)$ by the uniqueness of solutions.
Thus we complete our proof. 
\end{proof}
\begin{remark}\label{Rm:varphiFlowTorsion}
From the above proof and Lemma \ref{Lemma:traceAndHodge}, we know that the evolution equation of $\varphi$ in \eqref{Eq:H-SFlow} can rewrite
as
\begin{align*}
\frac{\partial}{\partial t}\varphi
=\partial\bar{\partial}^*\omega.
\end{align*}
And by direct computation, we have $\partial\bar{\partial}^*\omega=-\rho^{2,0}(\omega)$.
The right-hand term is exactly the $(2,0)$-part of Bismut-Ricci form.
So we say the additional equation is natural with respect to pluriclosed flow.
\end{remark}

\section{A topological obstruction}

In this section, we will introduce a positive function $\mathcal{V}:\Omega\mapsto\mathcal{V}(\Omega)>0$ related to Hermitian-symplectic forms.
And show that when a Hermitian-symplectic form deform along the parabolic flow \eqref{Eq:H-SFlow}, the function $\mathcal{V}(t)\triangleq\mathcal{V}(\Omega(t))$ is actually a polynomial with respect to time $t$.
Meanwhile, all coefficients of this polynomial are constants depending only on initial data $\Omega_0$ and the underlying complex manifold.
That will bring some obstructions to how long a solution to flow \eqref{Eq:H-SFlow} exists.
Especially, it brings a topological obstruction to global solutions to pluriclosed flow with Hermitian-symplectic initial data.

\subsection{Definition of exponential-type volume function}

Let\rq{}s begin with the definition of $\mathcal{V}$.
Given a complex manifold $(M^{2n},J)$.
Denote $\mathcal{A}^{2}_{HS}(M)$ the set of Hermitian-symplectic forms on the manifold and we assume it is not empty.

\begin{definition}
We define the exponential-type volume function
$\mathcal{V}:\mathcal{A}^{2}_{HS}(M)\to (0,+\infty)$ by
\begin{align}
\mathcal{V}(\Omega)
=\sum_{k\geq0}\frac{1}{(k!)^2}
(\varphi^k,\varphi^k)_{\omega}
\label{Def:volumeTypeFunction}
\end{align}
Here $\Omega=\varphi+\omega+\bar{\varphi}$ and $(\cdot,\cdot)_{\omega}$ is the inner product induced by metric $\omega$.
\end{definition}

Here we will give some explanations about this definition.
Firstly, definition \eqref{Def:volumeTypeFunction} is actually a sum of finite term for a given manifold has finite dimension.
Secondly, notice that $\omega$ is the $(1,1)$-part of a Hermitian-symplectic form, so it is a metric and we can have the inner product $(\cdot,\cdot)_{\omega}$.
For this reason, $\mathcal{V}$ can not define for all symplectic forms.
And when $n=2$,
$\mathcal{V}(\Omega)
=(1,1)_{\omega}+(\varphi,\varphi)_{\omega}
=\frac{1}{2}(\Omega,\Omega)_{\omega}
$, is the volume of Hermitian-symplectic form up to a constant.
This is why we call $\mathcal{V}$ volume function.
\begin{remark}
If we define the formal exponential map for differential forms by
\begin{align*}
\exp(\phi)=
\sum_{k\geq0}\frac{\phi^k}{k!}
\end{align*}
Then $\mathcal{V}(\omega)=(\exp(\varphi),\exp(\varphi))_{\omega}$ for the inner product of two forms with different bidegree is zero.
This is why we call $\mathcal{V}$ exponential-type.
\end{remark}

\subsection{Exponential-type volume function along Hermitian-symplectic flow}

In this subsection, we study the exponential-type volume function $\mathcal{V}(t)=\mathcal{V}(\Omega(t))$ along the flow \eqref{Eq:H-SFlow}.
And we claim that $\mathcal{V}$ is actually a polynomial of $t$ and the coefficient of its highest degree term is a topological quantity of the manifold.

More precisely, we have
\begin{theorem}\label{Thm:Vt}
For a solution to \eqref{Eq:H-SFlow} with initial data $\Omega_0$, the function $\mathcal{V}(t)$ is a polynomial of degree at most $n$
\begin{align*}
\mathcal{V}(t)
=\sum_{i=0}^{n}a_i t^i
\end{align*}
with coefficients
\begin{align*}
a_i
=\frac{1}{i!}
\sum_{\substack{k\geq0\\2k+i\leq n}}
\frac{1}{(k!)^2(n\m 2k\m i)!}
\int_{M}
\varphi_0^k
\wedge\bar{\varphi}_0^k
\wedge\omega^{n-2k-i}_0
\wedge(\sqrt{-1}\dd\db\log\det g_0)^{i}
\end{align*}
Specially, we have
\begin{align*}
a_n=\frac{1}{n!}(-1)^n
\int_{M}
c_{1}(M)^{n}
=\frac{1}{n!}(-1)^n c_1^n
\end{align*}
in which $c_{1}(M)$ is the first Chern class of $(M^{2n},J)$.
\end{theorem}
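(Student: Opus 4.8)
The plan is to convert the analytically-defined quantity $\mathcal{V}(\Omega(t))$, which depends on the time-varying data $\omega(t)$ and $\varphi(t)$, into a purely cohomological pairing, and then to exploit the fact that although $\Omega(t)$ stays closed along \eqref{Eq:H-SFlow} (Theorem \ref{Thm:H-SFlowByPDE}), its de Rham class drifts \emph{linearly} in $t$ with velocity $-c_1(M)$. The whole theorem then becomes the binomial expansion of a moving class.

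\emph{Step 1 (reduction to $\tfrac1{n!}\int\Omega^n$).} The key identity is that, for every Hermitian-symplectic form $\Omega=\varphi+\omega+\bar\varphi$,
\[
\mathcal{V}(\Omega)=\frac{1}{n!}\int_M\Omega^n .
\]
To prove it I would first establish, for an arbitrary $(2k,0)$-form $\alpha$, the Hodge-star relation
\[
(\alpha,\alpha)_\omega\,\frac{\omega^n}{n!}=\frac{1}{(n-2k)!}\,\alpha\wedge\bar\alpha\wedge\omega^{n-2k},
\]
by a direct computation in a unitary coframe in the same spirit as the appendix Lemma \ref{Lemma:traceAndHodge}: on both sides only matched index pairs survive, and relabelling complex coordinates permutes the pairs $dz^i\wedge d\bar z^i$ in an orientation-preserving way, so the proportionality constant is universal. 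Applying this with $\alpha=\varphi^k$ and summing, the weights $1/(k!)^2$ are precisely those occurring in the multinomial expansion of $\Omega^n=(\varphi+\omega+\bar\varphi)^n$, whose only top-degree contributions are the balanced terms $\tfrac{n!}{(k!)^2(n-2k)!}\varphi^k\wedge\omega^{n-2k}\wedge\bar\varphi^k$. This yields the identity and, incidentally, shows $\tfrac1{n!}\int_M\Omega^n>0$, matching $\mathcal{V}>0$.

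\emph{Step 2 (motion of the de Rham class).} Combining the pluriclosed flow with the $(2,0)$-equation in the form $\partial_t\varphi=\partial\bar\partial^*\omega$ of Remark \ref{Rm:varphiFlowTorsion}, one computes
\[
\frac{\partial}{\partial t}\Omega=d\big(\partial^*\omega+\bar\partial^*\omega\big)+\sqrt{-1}\,\partial\bar\partial\log\det g .
\]
The first term is exact; the second is closed but not exact, since $\log\det g$ is only locally defined. Moreover $[\sqrt{-1}\,\partial\bar\partial\log\det g]$ is metric-independent, because for two metrics the difference $\sqrt{-1}\,\partial\bar\partial\log(\det g/\det g')$ is globally $d$-exact, and this fixed class equals $-c_1(M)$. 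Hence in de Rham cohomology $\tfrac{d}{dt}[\Omega(t)]=[\sqrt{-1}\,\partial\bar\partial\log\det g_0]$, so $[\Omega(t)]=[\Omega_0]-t\,c_1(M)$.

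\emph{Step 3 (conclusion).} Since $\Omega(t)$ is closed, $\int_M\Omega(t)^n$ depends only on $[\Omega(t)]$, so by Step 1
\[
\mathcal{V}(t)=\frac{1}{n!}\big([\Omega_0]+t\,[\sqrt{-1}\,\partial\bar\partial\log\det g_0]\big)^n[M].
\]
Expanding binomially and expanding each $[\Omega_0]^{\,n-i}$ into its balanced terms exactly as in Step 1, the numerical factor $\tfrac1{n!}\binom{n}{i}(n-i)!$ collapses to $\tfrac1{i!}$ and reproduces the stated $a_i$; the top term ($i=n$, $k=0$) gives $a_n=\tfrac1{n!}\int_M(\sqrt{-1}\,\partial\bar\partial\log\det g_0)^n=\tfrac{(-1)^n}{n!}c_1^n$. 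I expect the main obstacle to be Step 1: pinning down the Hodge-star constant and sign for the \emph{non-primitive} forms $\varphi^k$ and checking that the reorderings of $\varphi^k$, $\bar\varphi^k$, $\omega^{n-2k}$ are consistent between the inner-product identity and the expansion of $\Omega^n$. Everything afterward is cohomological bookkeeping, whose one conceptual point is recognizing that $[\Omega(t)]$ genuinely moves—because $\sqrt{-1}\,\partial\bar\partial\log\det g$ is closed but not exact—which is exactly what makes $\mathcal{V}$ a nonconstant polynomial instead of a topological invariant.
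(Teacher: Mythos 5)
Your proposal is correct, but it follows a genuinely different and more conceptual route than the paper. The paper never passes to cohomology: it writes $\mathcal{V}(t)=Q[0;1](t)$ via the same Lefschetz identity you use in Step 1, and then grinds out the time derivatives directly, proving that $\beta[s]$ is pluriclosed (Lemma \ref{Lem:betaClosed}), computing $\frac{d}{dt}P[k,s;\phi]$ by integration by parts with cancellations coming from $\partial\omega=-\bar{\partial}\varphi$ (Lemma \ref{Lem:d1P}), telescoping these into $\frac{d}{dt}Q[s;\phi]=Q[s\p1;\sqrt{-1}\partial\bar{\partial}\log\det g\wedge\phi]$ (Lemma \ref{Lem:derivativeOfQ}), and finally inducting on the order of the derivative, using pluriclosedness of $\beta[s]$ to trade the evolving representative $\sqrt{-1}\partial\bar{\partial}\log\det g(t)$ for a fixed one. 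Your argument compresses all of this into three observations: $\mathcal{V}(\Omega)=\frac{1}{n!}\int_M\Omega^n$ (which sharpens the paper's own remark that $\mathcal{V}$ is the symplectic volume when $n=2$ to all dimensions); the de Rham class drifts linearly, $[\Omega(t)]=[\Omega_0]-t\,c_1(M)$, because $\partial_t\Omega=d(\partial^*\omega+\bar{\partial}^*\omega)+\sqrt{-1}\partial\bar{\partial}\log\det g$ by Remark \ref{Rm:varphiFlowTorsion}; and Stokes plus the binomial theorem, where the bidegree count forcing only balanced terms $a=c=k$ in $\Omega_0^{n-i}$ reproduces the stated $a_i$ exactly (your factor $\frac{1}{n!}\binom{n}{i}(n-i)!=\frac{1}{i!}$ checks out, as does the sign convention $[\sqrt{-1}\partial\bar{\partial}\log\det g_0]=-c_1$ matching the paper's normalization). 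What your approach buys is brevity and transparency: it makes evident \emph{why} $\mathcal{V}$ is a polynomial with topological top coefficient — the entire content is the linear motion of a de Rham class — and it renders all coefficients manifestly cohomological, $a_i$ being essentially $\frac{1}{i!(n-i)!}[\Omega_0]^{n-i}\smile(-c_1)^i$ evaluated on $[M]$. What the paper's route buys is form-level information that your argument discards: the pluriclosedness of the auxiliary forms $\beta[s]$ and the pointwise derivative formulas, which do not require quotienting by exact forms. Do note that both proofs rest on the same two inputs, established earlier in the paper and legitimately available to you: Theorem \ref{Thm:H-SFlowByPDE} (closedness of $\Omega(t)$ is preserved, which you need for Stokes) and Remark \ref{Rm:varphiFlowTorsion} (the form $\partial_t\varphi=\partial\bar{\partial}^*\omega$ of the $(2,0)$-equation), so there is no circularity.
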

We put the proof at the end of this section.

From Theorem \ref{Thm:Vt}, we know that $a_0=\mathcal{V}(0)=\mathcal{V}(\Omega_0)$ is a positive number.
Other coefficients may be difficult to calculate in general case.
The formula of $\mathcal{V}$ can bring us an obstruction to how long a solution to flow \eqref{Eq:H-SFlow} exists.

Formally, we have
\begin{corollary}
Assume a solution to flow \eqref{Eq:H-SFlow} exists on $[0,T)$, then the polynomial of $t$ defined by Theorem \ref{Thm:Vt} must have no roots on $[0,T)$.
\end{corollary}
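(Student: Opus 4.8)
The plan is to deduce the corollary directly from the strict positivity of $\mathcal{V}$, combined with the preservation result of Theorem \ref{Thm:H-SFlowByPDE} and the polynomial identity of Theorem \ref{Thm:Vt}. The crucial point is that for any Hermitian-symplectic form $\Omega=\varphi+\omega+\bar\varphi$ the quantity $\mathcal{V}(\Omega)$ is not merely nonnegative but strictly positive. Indeed, in the defining sum
\begin{align*}
\mathcal{V}(\Omega)=\sum_{k\geq0}\frac{1}{(k!)^2}(\varphi^k,\varphi^k)_\omega,
\end{align*}
the $k=0$ term equals $(1,1)_\omega=\int_M\frac{\omega^n}{n!}$, which is the Riemannian volume of $(M,\omega)$ and hence strictly positive, while every remaining term equals $\frac{1}{(k!)^2}\|\varphi^k\|_\omega^2\geq0$. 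Since $\omega$ is the $(1,1)$-part of a Hermitian-symplectic form it is a genuine positive-definite metric, so $(\cdot,\cdot)_\omega$ is positive-definite and these observations are legitimate.

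First I would invoke Theorem \ref{Thm:H-SFlowByPDE}: a solution $\Omega(t)=\varphi(t)+\omega(t)+\bar\varphi(t)$ of \eqref{Eq:H-SFlow} on $[0,T)$ remains Hermitian-symplectic for every $t\in[0,T)$. In particular $\omega(t)$ stays positive-definite throughout the interval of existence, so the preceding positivity argument applies at each time and yields $\mathcal{V}(t)=\mathcal{V}(\Omega(t))>0$ for all $t\in[0,T)$.

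Next I would apply Theorem \ref{Thm:Vt}, which identifies the function $t\mapsto\mathcal{V}(\Omega(t))$ on $[0,T)$ with the polynomial $P(t)=\sum_{i=0}^n a_i t^i$ whose coefficients depend only on $\Omega_0$ and the complex manifold. Combining the two steps gives $P(t)=\mathcal{V}(t)>0$ for every $t\in[0,T)$. Consequently, if $P$ had a root $t_0\in[0,T)$ we would obtain $\mathcal{V}(t_0)=0$, contradicting strict positivity; hence $P$ has no root on $[0,T)$, which is exactly the claim.

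There is essentially no analytic obstacle here, since the corollary is a formal consequence of results proved earlier in the section; the only point that warrants care is the passage from nonnegativity to strict positivity of $\mathcal{V}$. I would therefore emphasize that it is precisely the $k=0$ summand, namely the honest volume of the underlying Hermitian metric, which does not degenerate so long as the solution exists, that guarantees $\mathcal{V}(t)$ can never reach zero. This is what genuinely excludes a root of $P$ on $[0,T)$, rather than leaving open the possibility that $\mathcal{V}$ merely vanishes while staying nonnegative.
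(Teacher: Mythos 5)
Your proposal is correct and follows essentially the same route as the paper, which likewise deduces the corollary from the identification of $\mathcal{V}(t)$ with the polynomial together with the strict positivity of the exponential-type volume function by definition. Your additional care in isolating the $k=0$ term $(1,1)_\omega=\int_M\frac{\omega^n}{n!}>0$ and in citing Theorem \ref{Thm:H-SFlowByPDE} to keep $\omega(t)$ positive-definite simply makes explicit what the paper leaves implicit.
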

\begin{proof}
Just note that the polynomial is the exponential-type function and it is positive by definition.
\end{proof}

For global solutions, i.e. solutions exist on $[0,+\infty)$, we have a more useful obstruction.
And we state it as a theorem.
\begin{theorem}\label{Thm:obstructionForH-SFlow}
If there exists a global solution to flow \eqref{Eq:H-SFlow} on $(M^{2n},J)$, then $(-1)^n c_{1}^{n}$ must be nonnegative.
\end{theorem}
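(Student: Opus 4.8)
The plan is to deduce Theorem~\ref{Thm:obstructionForH-SFlow} as a direct consequence of Theorem~\ref{Thm:Vt} together with the positivity of the exponential-type volume function $\mathcal{V}$. The starting point is that for a global solution, $\mathcal{V}(t)$ is defined and strictly positive for all $t\in[0,+\infty)$, since by construction $\mathcal{V}(\Omega)=\sum_{k\geq0}\frac{1}{(k!)^2}(\varphi^k,\varphi^k)_{\omega}>0$ whenever $\Omega$ is an honest Hermitian-symplectic form (the $k=0$ term alone contributes $(1,1)_{\omega}=\operatorname{vol}_\omega(M)>0$). By Theorem~\ref{Thm:H-SFlowByPDE} the flow preserves the Hermitian-symplectic condition, so $\mathcal{V}(t)>0$ for every $t$ in the existence interval.

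First I would invoke Theorem~\ref{Thm:Vt} to write $\mathcal{V}(t)=\sum_{i=0}^{n}a_i t^i$ as a polynomial of degree at most $n$ with leading coefficient $a_n=\frac{1}{n!}(-1)^n c_1^n$. The key observation is then purely about real polynomials: if a polynomial of degree $\leq n$ is strictly positive on all of $[0,+\infty)$, then its behavior as $t\to+\infty$ is governed by its leading term. Concretely, if $a_n\neq 0$ then $\mathcal{V}(t)\sim a_n t^n$ as $t\to\infty$, so $a_n>0$ is forced; and if $a_n=0$ the polynomial has lower degree but the same argument applied to the true leading coefficient still yields a positive value, so in all cases we must have $a_n\geq 0$. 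Substituting $a_n=\frac{1}{n!}(-1)^n c_1^n$ and using that $\frac{1}{n!}>0$ gives $(-1)^n c_1^n\geq 0$, which is exactly the claimed conclusion.

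The one point requiring a little care is the case $a_n=0$: one must argue that $(-1)^n c_1^n=0$ is consistent with the nonnegativity claim (it is, since the statement asserts $(-1)^nc_1^n$ is nonnegative, and $0$ is nonnegative), and that the positivity of $\mathcal{V}$ does not secretly force a strict inequality. Since the theorem only claims nonnegativity, the degenerate case is harmless and I would simply note that $a_n=0$ trivially satisfies $(-1)^nc_1^n\geq 0$.

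I do not expect a genuine obstacle here, since all the substantive work is already packaged in Theorem~\ref{Thm:Vt}; the proof is essentially the elementary fact that a globally positive polynomial on a half-line cannot have a strictly negative leading coefficient. If anything, the only subtlety to state cleanly is the asymptotic argument as $t\to+\infty$, which I would phrase as: were $(-1)^n c_1^n<0$, then $a_n<0$ and $\mathcal{V}(t)\to-\infty$, contradicting $\mathcal{V}(t)>0$ for all large $t$. Hence $(-1)^n c_1^n\geq 0$, completing the proof.
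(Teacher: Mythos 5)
Your proposal is correct and follows essentially the same route as the paper: both deduce the result from Theorem~\ref{Thm:Vt} plus the strict positivity of $\mathcal{V}(t)$ along a global solution, arguing that a negative leading coefficient $a_n=\frac{1}{n!}(-1)^n c_1^n$ would force $\mathcal{V}(t)$ to become negative (equivalently, to have a root) on $[0,+\infty)$, a contradiction. The paper's own proof is just a terser statement of this same contradiction, so there is nothing substantive to add.
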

\begin{proof}
If $(-1)^{n} c_{1}^{n}$ is negative, then $\mathcal{V}(t)$ must have a root in $[0,+\infty)$.
This is a contradiction to long-time existence of the solution.
\end{proof}

In Remark \ref{Rm:pluriclosedFlowBCzero}, we point out that pluriclosed flow with initial data $\omega_0$ satisfying $[\dd\omega_0]=0\in\text{H}^{2,1}_{BC}(M,\mathbb{C})$ is a special case that can be extended to a Hermitian-symplectic form.
So the sign of $(-1)^n c_{1}^{n}$ is also an obstruction to global solutions to pluriclosed flow under those assumption.

\subsection{Proof of Theorem \ref{Thm:Vt}}

In this subsection, we give the proof of Theorem \ref{Thm:Vt}.

Firstly, we need the following lemma.

\begin{lemma}\label{Lem:betaClosed}
Assume $\Omega(t)=\varphi(t)+\omega(t)+\bar{\varphi}(t)$ is a solution to \eqref{Eq:H-SFlow} with initial data $\Omega_0$.
Define the auxiliary differential forms
\begin{equation}
\label{Def:alpha}
\alpha[k,s](t)=
\left\{
\begin{aligned}
&\varphi^{k}(t)\wedge\bar{\varphi}^{k}(t)
\wedge\omega^{n-2k-s}(t)
,&&s,k\geq0\text{\rm\ and }2k+s\leq n
\\
&0,&&\text{otherwise}
\end{aligned}
\right.
\end{equation}
and
\begin{align}
\label{Def:beta}
\beta[s](t)
=\sum_{k\geq0}
\frac{1}{(k!)^2(n-2k-s)!}
\alpha[k,s](t)
\end{align}
Then $\beta[s](t)$ is pluriclosed.
\end{lemma}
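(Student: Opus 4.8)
The plan is to recognize each $\beta[s](t)$ as a single bidegree component of a power of the (closed) Hermitian-symplectic form $\Omega(t)$, and then read off pluriclosedness directly from the closedness of that power by bookkeeping bidegrees. Throughout, the time $t$ is fixed, and the only property of $\Omega(t)$ I use is that it is closed; by Theorem~\ref{Thm:H-SFlowByPDE} this holds at every $t$, so the statement is really pointwise in time.

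First I would expand the power $\Omega^{n-s}=(\varphi+\omega+\bar\varphi)^{n-s}$ by the multinomial theorem. Since $\varphi$, $\omega$, $\bar\varphi$ all have even degree they commute, so no signs enter and a generic term is $\frac{(n-s)!}{a!\,b!\,c!}\,\varphi^a\wedge\omega^b\wedge\bar\varphi^c$ with $a+b+c=n-s$ and bidegree $(2a+b,\,2c+b)$. The $(n-s,n-s)$-part is cut out by $a=c=k$, $b=n-s-2k$, and after reordering $\bar\varphi^k$ past $\omega^{n-s-2k}$ (the Koszul sign is $(-1)^{(2k)\cdot 2(n-s-2k)}=+1$) one finds
\[
\bigl(\Omega^{n-s}\bigr)^{(n-s,n-s)}
=(n-s)!\sum_{k\ge 0}\frac{1}{(k!)^2(n\m 2k\m s)!}\,\alpha[k,s]
=(n-s)!\,\beta[s].
\]
The constraint $b=n-s-2k\ge 0$ reproduces exactly the cutoff $2k+s\le n$ encoded in the convention $\alpha[k,s]=0$, so the identification is clean at the edge cases.

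Second, since $\Omega(t)$ is closed we have $d\bigl(\Omega^{n-s}\bigr)=0$. Decomposing this equation by bidegree and collecting the $(n-s+1,\,n-s)$-component gives
\[
\partial\bigl(\Omega^{n-s}\bigr)^{(n-s,n-s)}
+\bar\partial\bigl(\Omega^{n-s}\bigr)^{(n-s+1,\,n-s-1)}=0 .
\]
Applying $\bar\partial$ and using $\bar\partial^{2}=0$ yields $\bar\partial\partial\bigl(\Omega^{n-s}\bigr)^{(n-s,n-s)}=0$, that is $\partial\bar\partial\beta[s]=0$, which is the claim.

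I do not expect a genuine obstacle: the argument is essentially bidegree bookkeeping. The only points requiring care are the combinatorial and sign check that identifies $\beta[s]$ with the $(n-s,n-s)$-part of $\Omega^{n-s}$ (including the boundary convention $\alpha[k,s]=0$), and the remark that it is closedness of $\Omega$ alone, rather than any feature of the flow, that is used. The actual content of the lemma is the reframing itself: once $\beta[s]$ is seen as one bidegree slice of a closed form, pluriclosedness is automatic.
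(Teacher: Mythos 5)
Your proof is correct, and it takes a genuinely different route from the paper's. The paper argues by direct computation: it applies $\partial\bar\partial$ to each $\alpha[k,s]$, uses $\partial\omega=-\bar\partial\varphi$ and $\bar\partial\bar\varphi=0$ to kill the cross terms, obtains
\begin{equation*}
\partial\bar\partial\alpha[k,s]
=\bigl(-k^2\alpha[k-1,s+2]
+(n-2k-s)(n-2k-s-1)\,\alpha[k,s+2]\bigr)
\wedge\partial\omega\wedge\bar\partial\omega ,
\end{equation*}
and then the weighted sum over $k$ telescopes to zero after reindexing. You instead identify $(n-s)!\,\beta[s]$ as the $(n-s,n-s)$-component of $\Omega^{n-s}$ --- your multinomial bookkeeping, including the fact that the constraint $n-s-2k\ge 0$ reproduces the cutoff convention $\alpha[k,s]=0$, is right --- and invoke the general fact that every bidegree component of a closed form is pluriclosed: take the $(n-s+1,n-s)$-piece of $d(\Omega^{n-s})=0$ and apply $\bar\partial$. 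Both arguments rest on the same external input, namely that $\Omega(t)$ stays closed along the flow (Theorem \ref{Thm:H-SFlowByPDE}); the paper uses it in the guise $\partial\omega=-\bar\partial\varphi$ and $\partial\varphi=0$, and that theorem is proved before and independently of this lemma, so your citation introduces no circularity. Your route is shorter, it explains the otherwise mysterious weights $1/\bigl((k!)^2(n-2k-s)!\bigr)$ as the multinomial coefficients of $\Omega^{n-s}/(n-s)!$, it makes clear that the statement is about closed forms rather than about the flow (any Hermitian-symplectic form, not just one evolving under \eqref{Eq:H-SFlow}, yields pluriclosed $\beta[s]$), and it recovers as the special case $s=n-1$ (where $\beta[n-1]=\omega$) the observation of Section 2 that the $(1,1)$-part of a Hermitian-symplectic form is pluriclosed. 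What the paper's computation buys is mainly local self-containedness: it manipulates the $\alpha[k,s]$ directly, in the same style as the telescoping cancellation that reappears in the proof of Lemma \ref{Lem:derivativeOfQ}. One small remark: your identification requires $s\le n$ so that $\Omega^{n-s}$ makes sense; for $s>n$ the lemma is vacuous since $\beta[s]=0$.
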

\begin{proof}
By direct computation,
\begin{align*}
\partial\bar{\partial}\alpha[k,s]
&=
\partial
\left\{
k\bar{\partial}\varphi
\wedge\varphi^{k-1}
\wedge\bar{\varphi}^{k}
\wedge\omega^{n-2k-s}
+
(n\m 2k \m s)
\varphi^{k}
\wedge\bar{\varphi}^{k}
\wedge\omega^{n-2k-s-1}
\wedge\bar{\partial}\omega
\right\}
\\
&=-k^2\bar{\partial}\varphi
\wedge\varphi^{k-1}
\wedge\bar{\varphi}^{k-1}
\wedge\partial\bar{\varphi}
\wedge\omega^{n-2k-s}
\\
&\quad-
k(n\m 2k\m s)
\bar{\partial}\varphi
\wedge\varphi^{k-1}
\wedge\bar{\varphi}^{k}
\wedge\partial\omega
\wedge\omega^{n-2k-s-1}
\\
&\quad+
k(n\m 2k\m s)
\partial\varphi
\wedge\varphi^{k-1}
\wedge\bar{\varphi}^{k}
\wedge\bar{\partial}\omega
\wedge\omega^{n-2k-s-1}
\\
&\quad+
(n\m 2k\m s)(n\m 2k\m s\m 1)
\varphi^{k}
\wedge\bar{\varphi}^{k}
\wedge\omega^{n-2k-s-2}
\wedge\partial\omega
\wedge\bar{\partial}\omega
\\
&=\Big\{
-k^2\alpha[k\m 1,s\p 2]
+(n\m 2k\m s)(n\m 2k\m s\m 1)\alpha[k,s\p 2]
\Big\}
\wedge
\partial\omega
\wedge\bar{\partial}\omega
\end{align*}

Note that $\partial\omega=-\bar{\partial}\varphi$.
So
$\bar{\partial}\varphi\wedge\partial\omega
=-\partial\omega\wedge\partial\omega=0$
for $\partial\omega$ is an odd form.
Thus the third and fourth lines equal zero.
By direct computation, it is easy to check that the formula above is also true when $k=0$ and $k=(n-s)/2$.

Then we have
\begin{align*}
\partial\bar{\partial}\beta[s]
&=
\sum_{k\geq0}
\frac{1}{(k!)^2(n\m 2k\m s)!}
\Big(
-k^2\alpha[k\m 1,s\p 2]
+(n\m 2k\m s)
(n\m 2k\m s\m 1)\alpha[k,s\p 2]
\Big)
\\
&\qquad\wedge
\partial\omega
\wedge\bar{\partial}\omega
\\
&=
-\sum_{k\geq1}
\frac{\alpha[k\m 1,s\p 2]}{(k-1!)^2(n\m 2k\m s)!}
+\sum_{k\geq0}
\frac{\alpha[k,s\p 2]}{(k!)^2(n\m 2k\m s\m 2)!}
\\
&=
-\sum_{k\geq0}
\frac{\alpha[k,s\p 2]}{(k!)^2(n\m 2k\m s\m 2)!}
+\sum_{k\geq0}
\frac{\alpha[k,s\p 2]}{(k!)^2(n\m 2k\m s\m 2)!}
\\
&=0
\end{align*}
The second equal sign is because $\alpha[-1,s]=0$ and we just replace the index $k-1$ by $k$ in the third line.
Thus we complete our proof.
\end{proof}

To compute the derivative of function $\mathcal{V}(t)=\mathcal{V}(\Omega(t))$ with respect to time $t$.
We need the next two lemmas.

\begin{lemma}\label{Lem:d1P}
Given a fixed real form $\phi$ with degree $2s$ which is $\partial$ and $\bar{\partial}$ closed.
Define a function with respect to $t$ by
\begin{align*}
P[k,s;\phi](t)
=\int_{M}\alpha[k,s](t)\wedge\phi
\end{align*}
in which $\alpha$ is defined by \eqref{Def:alpha}.
Then we have
\begin{equation}
\label{Eq:d1P}
\begin{aligned}
\frac{d}{dt}P[k,s;\phi]
&=
-k^2(A[k,s]+\overline{A[k,s]})
\\
&\qquad+
(n\m 2k\m s)(n\m 2k\m s\m 1)
(B[k,s]+\overline{B[k,s]})
\\
&\qquad+
(n\m 2k\m s)
\int_{M}
\alpha[k,s\p 1]
\wedge\sqrt{-1}\dd\db\log\det g
\wedge\phi
\end{aligned}
\end{equation}
where
\begin{align}
\label{Eq:d1Ptermbefore}
A[k,s]
=\int_{M}
\db^*\omega
\wedge\alpha[k\m 1,s\p 2]
\wedge\db\omega
\wedge\phi
\end{align}
and
\begin{align}
\label{Eq:d1Ptermafter}
B[k,s]
=\int_{M}
\db^*\omega
\wedge\alpha[k,s\p 2]
\wedge\db\omega
\wedge\phi.
\end{align}
\end{lemma}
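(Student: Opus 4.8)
The plan is to differentiate $P[k,s;\phi]$ under the integral sign and then integrate by parts repeatedly, exploiting that $\phi$ is $d$-closed and that the Hermitian-symplectic constraint forces strong relations among the time-derivatives of $\varphi$, $\bar\partial$ and $\omega$. Since $\alpha[k,s]=\varphi^{k}\wedge\bar\varphi^{k}\wedge\omega^{n-2k-s}$, the Leibniz rule produces three families of terms, coming from $\partial_t\varphi$, $\partial_t\bar\varphi$ and $\partial_t\omega$. Into these I would substitute the flow \eqref{Eq:H-SFlow}: by Remark \ref{Rm:varphiFlowTorsion} one may write $\partial_t\varphi=\partial\bar\partial^{*}\omega$ (and its conjugate for $\bar\varphi$), while $\partial_t\omega=\partial\partial^{*}\omega+\bar\partial\bar\partial^{*}\omega+\sqrt{-1}\,\partial\bar\partial\log\det g$. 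The summand $\sqrt{-1}\,\partial\bar\partial\log\det g$, paired with the factor $(n-2k-s)$ from differentiating $\omega^{n-2k-s}$, contributes exactly the last term of \eqref{Eq:d1P}, so it can be set aside at once and the analysis concentrated on the remaining pieces.

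For those remaining pieces the key inputs are the closedness relations. Because $\phi$ is $\partial$- and $\bar\partial$-closed, $\varphi$ is $\partial$-closed (Lemma \ref{Lemma:(p,0)flow}) and $\bar\varphi$ is $\bar\partial$-closed, Stokes' theorem lets me move the outer derivative ($\partial$ or $\bar\partial$) off the $\bar\partial^{*}\omega$ or $\partial^{*}\omega$ factor and onto the remaining factors, with no boundary contribution. When the freed derivative lands on $\varphi^{k}$ or $\bar\varphi^{k}$ I would invoke the Hermitian-symplectic identities $\partial\omega=-\bar\partial\varphi$ and $\partial\bar\varphi=-\bar\partial\omega$ (the latter being the conjugate of the former, since $\omega$ is real) to convert $\partial\bar\varphi$ and $\bar\partial\varphi$ back into $\bar\partial\omega$ and $\partial\omega$. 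The terms in which the $\bar\partial^{*}\omega$-derivative hits the $\bar\varphi^{k}$ factor acquire two factors of $k$ and yield precisely $-k^{2}A[k,s]$, with $A$ as in \eqref{Eq:d1Ptermbefore}; the terms in which differentiating $\omega^{n-2k-s-1}$ meets a further $\bar\partial\omega$ yield $(n-2k-s)(n-2k-s-1)B[k,s]$, with $B$ as in \eqref{Eq:d1Ptermafter}. The $\partial^{*}\omega$-family (from $\partial_t\bar\varphi$ and the $\partial\partial^{*}\omega$ part of $\partial_t\omega$) contributes the complex conjugates $\overline{A[k,s]}$ and $\overline{B[k,s]}$.

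Finally I would verify that all leftover ``cross'' terms cancel. These are the terms in which the freed derivative lands on $\omega^{n-2k-s}$ in the $\partial_t\varphi$-family, or on $\varphi^{k}$ in the $\partial_t\omega$-family; after applying $\partial\omega=-\bar\partial\varphi$ each such term carries the identical collection of factors ($\bar\partial^{*}\omega$, $\bar\partial\varphi$, $\varphi^{k-1}$, $\bar\varphi^{k}$, $\omega^{n-2k-s-1}$, $\phi$) up to the ordering of the two odd factors $\bar\partial^{*}\omega$ and $\bar\partial\varphi$, so they differ only by the sign $(-1)^{1\cdot 3}=-1$ and cancel in pairs. Collecting the surviving pieces gives \eqref{Eq:d1P}. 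I expect the main obstacle to be precisely this sign bookkeeping: $\varphi,\bar\varphi,\omega$ are even and commute freely, but $\bar\partial^{*}\omega$ is odd of degree $1$ and $\partial\omega,\bar\partial\omega,\bar\partial\varphi$ are odd of degree $3$, so every reordering and every application of Stokes introduces a sign, and the cancellation of the cross terms together with the exact coefficients $-k^{2}$ and $(n-2k-s)(n-2k-s-1)$ hinges on these signs lining up. A clean way to control this is to fix once and for all the convention placing $\bar\partial^{*}\omega$ at the front of each integrand, as in \eqref{Eq:d1Ptermbefore}--\eqref{Eq:d1Ptermafter}, and to perform each integration by parts on the globally defined top-degree form before reordering the wedge factors.
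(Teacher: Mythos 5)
Your proposal is correct and follows essentially the same route as the paper's own proof: differentiate under the integral, substitute \eqref{Eq:H-SFlow} with $\frac{\partial}{\partial t}\varphi=\partial\bar{\partial}^{*}\omega$ (Remark \ref{Rm:varphiFlowTorsion}), set aside the $\sqrt{-1}\partial\bar{\partial}\log\det g$ contribution, integrate by parts using $\partial\phi=\bar{\partial}\phi=\partial\varphi=\bar{\partial}\bar{\varphi}=0$ to produce the $-k^{2}A[k,s]$ and $(n-2k-s)(n-2k-s-1)B[k,s]$ terms plus their conjugates, and cancel the two cross terms via $\partial\omega=-\bar{\partial}\varphi$. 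One sign correction to your bookkeeping: the $-1$ that makes the cross terms cancel comes entirely from the relation $\bar{\partial}\varphi=-\partial\omega$ itself, not from transposing the odd factors $\bar{\partial}^{*}\omega$ and $\bar{\partial}\varphi$ --- in both integrands $\bar{\partial}^{*}\omega$ stays in front and the odd $3$-form only moves past the even forms $\varphi^{k-1}\wedge\bar{\varphi}^{k}$, which costs no sign --- so your factor $(-1)^{1\cdot 3}$, combined with the $-1$ already incurred by the substitution, would wrongly give $+1$ and spoil the cancellation.
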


\begin{proof}
Here we use a point overhead to represent the derivative with respect to time $t$.
Derivate $P$ with respect to $t$ and notice that $\omega(t)$ satisfies equations \eqref{Eq:torsionFlow-omega}.
\begin{align*}
\frac{d}{dt}P[k,s;\phi]
&=
\frac{d}{dt}
\int_{M}
\varphi^{k}\wedge\bar{\varphi}^{k}
\wedge\omega^{n-2k-s}
\wedge\phi
\\
&=
k
\left\{
\int_{M}
\dot{\varphi}\wedge
\varphi^{k-1}\wedge\bar{\varphi}^{k}
\wedge\omega^{n-2k-s}
\wedge\phi
+
\int_{M}
\varphi^{k}\wedge
\dot{\bar{\varphi}}\wedge
\bar{\varphi}^{k-1}
\wedge\omega^{n-2k-s}
\wedge\phi
\right\}
\\
&\quad+
(n\m 2k\m s)
\int_{M}
\varphi^{k}
\wedge\bar{\varphi}^{k}
\wedge\omega^{n-2k-s-1}
\wedge\bar{\partial}\bar{\partial}^*\omega
\wedge\phi
\\
&\quad+
(n\m 2k\m s)
\int_{M}
\varphi^{k}\wedge
\bar{\varphi}^{k}
\wedge\omega^{n-2k-s-1}
\wedge\partial\partial^*\omega
\wedge\phi
\\
&\quad+
(n\m 2k\m s)
\int_{M}
\varphi^{k}\wedge
\bar{\varphi}^{k}
\wedge\omega^{n-2k-s-1}
\wedge\sqrt{-1}\partial\bar{\partial}\log\det g
\wedge\phi
\end{align*}
The second and the fourth terms are just the conjugation of the first and the third terms, respectively.
Recall Remark \ref{Rm:varphiFlowTorsion} and note $\phi$ is $\partial$-closed. 
Then the first term is 
\begin{align*}
\text{1st term }
&=
k\int_{M}
\partial\bar{\partial}^*\omega
\wedge\varphi^{k-1}
\wedge\bar{\varphi}^{k}
\wedge\omega^{n-2k-s}
\wedge\phi
\\
&=
k^2
\int_{M}
\bar{\partial}^*\omega
\wedge\varphi^{k-1}
\wedge\partial\bar{\varphi}
\wedge\bar{\varphi}^{k-1}
\wedge\omega^{n-2k-s}
\wedge\phi
\\
&\qquad+
k(n\m 2k\m s)
\int_{M}
\bar{\partial}^*\omega
\wedge\varphi^{k-1}
\wedge\bar{\varphi}^{k}
\wedge\partial\omega
\wedge\omega^{n-2k-s-1}
\wedge\phi
\end{align*}
Similarly,
\begin{align*}
\text{3rd term }
&=
(n\m 2k\m s)
\int_{M}
\bar{\partial}\bar{\partial}^*\omega
\wedge\varphi^{k}
\wedge\bar{\varphi}^{k}
\wedge\omega^{n-2k-s-1}
\wedge\phi
\\
&=
(n\m 2k\m s)(n\m 2k\m s\m 1)
\int_{M}
\bar{\partial}^*\omega
\wedge\varphi^{k}
\wedge\bar{\varphi}^{k}
\wedge\bar{\partial}\omega
\wedge\omega^{n-2k-s-2}
\wedge\phi
\\
&\qquad+
k(n\m 2k\m s)
\int_{M}
\bar{\partial}^*\omega
\wedge\bar{\partial}\varphi
\wedge\varphi^{k-1}
\wedge\bar{\varphi}^{k}
\wedge\omega^{n-2k-s-1}
\wedge\phi
\end{align*}
Here we use $\bar{\partial}\phi=0$.
The sum in last line of \lq\lq{}1st term\rq\rq{} and in last line of \lq\lq{}3rd term\rq\rq{} are zero for $\dd\omega+\db\varphi=0$.

Summing the above and their conjugations, we get formula \eqref{Eq:d1P}.
\end{proof}
Notice that we can also choose $s=0$ in the above lemma.
And now we give another lemma we need.
\begin{lemma}\label{Lem:derivativeOfQ}
Given a fixed real form $\phi$ with degree $2s$ which is $\partial$ and $\bar{\partial}$ closed.
Define a function with respect to $t$ by
\begin{align}
\label{Def:Q}
Q[s;\phi](t)
=\int_{M}\beta[s](t)
\wedge\phi
\end{align}
in which $\beta$ is defined by \eqref{Def:beta}.
Then we have
\begin{align*}
\frac{d}{dt}Q[s;\phi](t)
&=Q[s+1;\sqrt{-1}\dd\db\log\det g(t)\wedge\phi]
\\
&=
\int_{M}\beta[s+1](t)
\wedge\sqrt{-1}\dd\db\log\det g(t)
\wedge\phi.
\end{align*}
\end{lemma}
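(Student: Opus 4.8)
The claim is that differentiating $Q[s;\phi](t)=\int_M \beta[s](t)\wedge\phi$ in time produces $Q[s+1;\sqrt{-1}\partial\bar\partial\log\det g\wedge\phi]$. The plan is to expand $\frac{d}{dt}Q[s;\phi]$ using the definition $\beta[s]=\sum_k \frac{1}{(k!)^2(n-2k-s)!}\alpha[k,s]$ and then apply Lemma~\ref{Lem:d1P} termwise, since each $P[k,s;\phi]=\int_M\alpha[k,s]\wedge\phi$ has its derivative given explicitly by \eqref{Eq:d1P}. Because $\beta[s]$ is a finite linear combination of the $\alpha[k,s]$, I would write
\begin{align*}
\frac{d}{dt}Q[s;\phi]
=\sum_{k\geq0}\frac{1}{(k!)^2(n-2k-s)!}\frac{d}{dt}P[k,s;\phi],
\end{align*}
and substitute the three groups of terms from \eqref{Eq:d1P}: the $A[k,s]$ terms, the $B[k,s]$ terms, and the $\sqrt{-1}\partial\bar\partial\log\det g$ term.

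**The main cancellation.**
The hard part will be showing that the $A[k,s]$ and $B[k,s]$ contributions cancel against each other after summation, leaving only the $\log\det g$ term. This is exactly the combinatorial shift already seen in the proof of Lemma~\ref{Lem:betaClosed}. Looking at the coefficients, the $B[k,s]$ term carries weight $\frac{(n-2k-s)(n-2k-s-1)}{(k!)^2(n-2k-s)!}=\frac{1}{(k!)^2(n-2k-s-2)!}$, while the $A[k,s]$ term carries weight $\frac{-k^2}{(k!)^2(n-2k-s)!}=\frac{-1}{((k-1)!)^2(n-2k-s)!}$. Reindexing the $A$-sum via $k\mapsto k+1$ (so that $\alpha[k-1,s+2]$ becomes $\alpha[k,s+2]$ with the shifted denominator $((k)!)^2(n-2(k+1)-s)!=(k!)^2(n-2k-s-2)!$) shows the $A$ and $B$ sums are identical up to sign and therefore cancel. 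I would note that $A[k,s]$ and $B[k,s]$ both contain the same factor $\bar\partial^*\omega\wedge(\cdots)\wedge\bar\partial\omega\wedge\phi$, so only the middle $\alpha$ factor and the scalar weight differ, which is what makes the cancellation purely combinatorial and identical in structure to the $\partial\bar\partial\beta[s]=0$ computation. The conjugate terms $\overline{A[k,s]}$ and $\overline{B[k,s]}$ cancel by the same argument.

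**Assembling the surviving term.**
After the cancellation, only the $\log\det g$ contribution remains:
\begin{align*}
\frac{d}{dt}Q[s;\phi]
=\sum_{k\geq0}\frac{(n-2k-s)}{(k!)^2(n-2k-s)!}
\int_M\alpha[k,s+1]\wedge\sqrt{-1}\partial\bar\partial\log\det g\wedge\phi.
\end{align*}
Since $\frac{n-2k-s}{(n-2k-s)!}=\frac{1}{(n-2k-s-1)!}=\frac{1}{(n-2k-(s+1))!}$, the coefficient is precisely $\frac{1}{(k!)^2(n-2k-(s+1))!}$, which is the weight appearing in $\beta[s+1]$. Hence the sum equals $\int_M\beta[s+1]\wedge\sqrt{-1}\partial\bar\partial\log\det g\wedge\phi$, i.e. $Q[s+1;\sqrt{-1}\partial\bar\partial\log\det g\wedge\phi]$. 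The one hypothesis I must verify before invoking Lemma~\ref{Lem:d1P} is that the new test form $\sqrt{-1}\partial\bar\partial\log\det g\wedge\phi$ is itself $\partial$- and $\bar\partial$-closed and real, so that $Q[s+1;\cdot]$ is well-defined; this follows because $\sqrt{-1}\partial\bar\partial\log\det g$ is a closed real $(1,1)$-form (it represents $-2\pi c_1$ up to constant) and $\phi$ is closed by assumption, so their wedge is closed, real, and of degree $2(s+1)$.
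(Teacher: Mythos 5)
Your proposal is correct and follows essentially the same route as the paper's proof: expand $\frac{d}{dt}Q[s;\phi]$ termwise via Lemma~\ref{Lem:d1P}, cancel the $A$ and $B$ contributions (and their conjugates) through the reindexing $k\mapsto k+1$ using $\alpha[-1,s+2]=0$, and absorb the factor $\frac{n-2k-s}{(n-2k-s)!}=\frac{1}{(n-2k-(s+1))!}$ to recognize $\beta[s+1]$. Your additional check that $\sqrt{-1}\partial\bar{\partial}\log\det g\wedge\phi$ is real, $\partial$- and $\bar{\partial}$-closed is a nice touch that the paper leaves implicit.
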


\begin{proof}
Recall the definition of $\beta$ and use Lemma \ref{Lem:d1P}.
\begin{align*}
\frac{d}{dt}Q[s;\phi]
&=
\frac{d}{dt}
\int_{M}
\sum_{k\geq0}
\frac{1}{(k!)^2(n\m 2k\m s)!}
\alpha[k,s](t)
\wedge\phi
\\
&=
\sum_{k\geq0}
\frac{1}{(k!)^2(n\m 2k\m s)!}
\frac{d}{dt}
P[k,s;\phi]
\\
&=
\sum_{k\geq0}
\frac{1}{(k!)^2(n\m 2k\m s)!}
\Bigg\{
-k^2(A[k,s]+\overline{A[k,s]})
\\
&\qquad+
(n\m 2k\m s)(n\m 2k\m s\m 1)
(B[k,s]+\overline{B[k,s]})
\Bigg\}
\\
&\qquad+
\sum_{k\geq0}
\frac{n\m 2k\m s}{(k!)^2(n\m 2k\m s)!}
\int_{M}
\alpha[k,s\p 1]
\wedge\sqrt{-1}\dd\db\log\det g
\wedge\phi
\end{align*}
Here $A[k,s]$ and $B[k,s]$ are defined by \eqref{Eq:d1Ptermbefore} and \eqref{Eq:d1Ptermafter}, respectively.

Consider the term
\begin{align*}
&\sum_{k\geq0}
\frac{1}{(k!)^2(n\m 2k\m s)!}
\Big(-k^2A[k,s]+(n\m 2k\m s)(n\m 2k\m s\m 1)
B[k,s]
\Big)
\\
=&
-\sum_{k\geq1}
\frac{1}{((k\m 1)!)^2(n\m 2k\m s)!}
\int_{M}
\db^*\omega
\wedge\alpha[k\m 1,s\p 2]
\wedge\db\omega
\wedge\phi
\\
&\qquad+\sum_{k\geq0}
\frac{1}{(k!)^2(n\m 2k\m s\m 2)!}
\int_{M}
\db^*\omega
\wedge\alpha[k,s\p 2]
\wedge\db\omega
\wedge\phi
\\
=&
-\sum_{k\geq0}
\frac{1}{(k!)^2(n\m 2k\m s\m 2)!}
\int_{M}
\db^*\omega
\wedge\alpha[k,s\p 2]
\wedge\db\omega
\wedge\phi
\\
&\qquad+
\int_{M}
\db^*\omega
\wedge\beta[s\p 2]
\wedge\db\omega
\wedge\phi
\\
=&
-
\int_{M}
\db^*\omega
\wedge\beta[s\p 2]
\wedge\db\omega
\wedge\phi
+
\int_{M}
\db^*\omega
\wedge\beta[s\p 2]
\wedge\db\omega
\wedge\phi
\\
=&0
\end{align*}
The first equal sign is because $\alpha[-1,s+2]=0$.
\\
Thus,
\begin{align*}
\frac{d}{dt}Q[s;\phi]
&=
\sum_{k\geq0}
\frac{1}{(k!)^2(n\m 2k\m s\m 1)!}
\int_{M}
\alpha[k,s\p 1]
\wedge\sqrt{-1}\dd\db\log\det g
\wedge\phi
\\
&=
\int_{M}
\beta[s\p 1]
\wedge\sqrt{-1}\dd\db\log\det g
\wedge\phi
\\
&=
Q[s+1;\sqrt{-1}\dd\db\log\det g\wedge\phi].
\end{align*}
So we complete our proof.
\end{proof}

Now we are in the position to prove Theorem \ref{Thm:Vt}.

\begin{proof}[Proof of Theorem \ref{Thm:Vt}]
Assume $\Omega(t)=\varphi(t)\p \omega(t)\p \bar{\varphi}(t)$ is a solution to flow \eqref{Eq:H-SFlow}.
We denote $(\cdot,\cdot)_t$ the inner product induced by $\omega(t)$.
Notice that $\varphi^k$ is a prime form with respect to the adjoint Lefschetz operator defined by $\omega(t)$, for it is a $(2k,0)$-form.
Then by the Lefschetz theory,
\begin{align*}
(\varphi^k,\varphi^k)_t
&=\int_{M}\varphi^k
\wedge*\bar{\varphi}^k
=
\frac{1}{(n-2k)!}
\int_{M}\varphi^k
\wedge\bar{\varphi}^k
\wedge\omega^{n-2k}
\end{align*}
So we have
\begin{align*}
\mathcal{V}(t)
&=\sum_{k\geq0}
\frac{1}{(k!)^2}
(\varphi^k,\varphi^k)_t
\\
&=\sum_{k\geq0}
\frac{1}{(k!)^2(n\m 2k)!}
\int_{M}\varphi^k
\wedge\bar{\varphi}^k
\wedge\omega^{n-2k}
\\
&=
\int_{M}\beta[0]
\\
&=
Q[0;1](t).
\end{align*}
in which $\beta[s]$ and $Q[s;\phi]$ are defined by \eqref{Def:beta} and \eqref{Def:Q}, respectively.

Here we choose $\phi=1$, which is a real form of degree $0$ and satisfies $\dd 1=0$ and $\db 1=0$.
Thus Lemma \ref{Lem:derivativeOfQ} can be applied.

Derivate $\mathcal{V}(t)$ with respect to $t$ and apply Lemma \ref{Lem:derivativeOfQ}.
\begin{align*}
\frac{d}{dt}
\mathcal{V}(t)
&=
\frac{d}{dt}
Q[0;1](t)
\\
&=
Q[1;\sqrt{-1}\dd\db\log\det g](t)
\\
&=
\int_{M}
\beta[1](t)
\wedge\sqrt{-1}\dd\db\log\det g(t)
\\
&=
\int_{M}
\beta[1](t)
\wedge\dd\db\log\det h
+
\int_{M}
\beta[1](t)
\wedge\sqrt{-1}\dd\db\log\frac{\det g(t)}{\det h}
\\
&=Q[1;\sqrt{-1}\dd\db\log\det h]
\end{align*}
where $h$ is an arbitrary fixed Hermitian metric.
The last equal sign is because $\log\frac{\det g}{\det h}$ is a well-defined function on manifolds $M$.
Then applying Lemma \ref{Lem:betaClosed} we know that $\beta[1](t)$ is pluriclosed.
So by integrating by parts, we have
\begin{align*}
\int_{M}
\beta[1](t)
\wedge\sqrt{-1}\dd\db\log\frac{\det g(t)}{\det h}
=
\int_{M}
\dd\db\beta[1](t)
\wedge\sqrt{-1}\log\frac{\det g(t)}{\det h}
=0
\end{align*}

We complete the proof by induction.
The case of first order derivative has been proved.
Assume it is true for the $m$-th order derivative.
That is
\begin{align*}
\frac{d^m}{dt^m}
\mathcal{V}(t)
&=
\sum_{k\geq0}
\frac{1}{(k!)^2(n\m 2k\m m)!}
\int_{M}
\varphi^k
\wedge\bar{\varphi}^k
\wedge\omega^{n-2k-m}
\wedge(\sqrt{-1}\dd\db\log\det h)^{m}
\\
&=
\int_{M}
\beta[m]
\wedge(\sqrt{-1}\dd\db\log\det h)^{m}
\\
&=
Q[m;(\sqrt{-1}\dd\db\log\det h)^{m}]
\end{align*}

Then derivate it with respect to $t$ and apply Lemma \ref{Lem:betaClosed} and Lemma \ref{Lem:derivativeOfQ} again.
\begin{align*}
\frac{d^{m+1}}{dt^{m+1}}
\mathcal{V}(t)
&=
\frac{d}{dt}
Q[m;(\sqrt{-1}\dd\db\log\det h)^{m}]
\\
&=
Q[m\p 1;\sqrt{-1}\dd\db\log\det g(t)
\wedge(\sqrt{-1}\dd\db\log\det h)^{m}]
\\
&=
\int_{M}
\beta[m\p 1]
\wedge(\sqrt{-1}\dd\db\log\det h)^{m}
\wedge\sqrt{-1}\dd\db\log\det g(t)
\\
&
=\int_{M}
\beta[m\p 1]
\wedge(\sqrt{-1}\dd\db\log\det h)^{m+1}
\\
&=
\sum_{k\geq0}
\frac{1}{(k!)^2(n\m 2k\m m\m 1)!}
\int_{M}
\varphi^k
\mathord{\wedge}\bar{\varphi}^k
\mathord{\wedge}\omega^{n-2k-m-1}
\mathord{\wedge}(\sqrt{-1}\dd\db\log\det h)^{m+1}
\end{align*}
Notice that $\beta[m]=0$ for $m\geq n+1$, so the $m$-th order derivative of $\mathcal{V}$ equals zero for all $m\geq n+1$.
Specially,
when $m=n$, the dimension of manifold, we have
\begin{align*}
\frac{d^n}{dt^n}\mathcal{V}(t)
&=
Q[n,(\sqrt{-1}\dd\db\log\det h)^{n}]
\\
&=\int_{M}
(\sqrt{-1}\dd\db\log\det h)^{n}
\\
&=(-1)^{n}\int_{M}
c_{1}(M)^{n}
\\
&=(-1)^{n}c_{1}^{n},
\end{align*}
in which $c_{1}(M)$ is the first Chern class of $M$.

Thus $\mathcal{V}(t)=\sum a_it^i$ is a polynomial of degree at most $n$ and coefficients can be represented by
\begin{align*}
a_i
=\frac{1}{n!}\sum_{k\geq0}
\frac{1}{(k!)^2(n\m 2k\m i)!}
\int_{M}
\varphi_0^k
\wedge\bar{\varphi}_0^k
\wedge\omega^{n-2k-i}_0
\wedge(\sqrt{-1}\dd\db\log\det g_0)^{i}
\end{align*} 
Here we choose $h=g_0$, the initial metric.
So $a_i$ depends only on the complex manifold and initial data $\Omega_0$.
Specially, $a_n=(-1)^nc_1^n/n!$ is a topological quantity of the complex manifolds.
\end{proof}

\section{More discussion}

In this section we do some discussion in the case of compact complex surfaces.
One of the most important motivation of pluriclosed flow is to classify Kodaira\rq{}s class VII surfaces.
Inspired by Perelman\rq{}s great work on Ricci flow (see \cite{Pei1, Pei2, Pei3}), Tian and Streets use pluriclosed flow to find canonical metrics on non-K{\"a}hler manifolds in order to understand its geometry and topology.
A natural possible canonical metric is static metric, which satisfies $\omega=\lambda\rho^{1,1}$ for some real number $\lambda$.
Notice that the static metric with $\lambda\neq0$ is also Hermitian-symplectic.
And this is our motivation to study Hermitian-symplectic structures.
An important first step is that Tian and Streets\cite{PF} prove class $\text{VII}^{+}$ surfaces, (i.e. class VII surfaces with $b_2>0$), admit no static metrics.
But it is not true for class VII surfaces with $b_2=0$ because the standard metric of Hopf surface is static with $\lambda=0$.
Recall that Bogomolov\cite{Bogomolov1, Bogomolov2} proves a class VII surface with $b_2=0$ is biholomorphic to either a Hopf surface or an Inoue surface.

Tian and Streets\cite{PF} have shown that a compact complex surface admits Hermitian-symplectic structures must be K{\"a}hler using the classification of compact complex surfaces.
As a corollary, non-K{\"a}hler surfaces admit no static metric with $\lambda\neq0$.
Here we can give more information about the relationship between Hermitian-symplectic structures and K{\"a}hler structures in dimension two using flow \eqref{Eq:H-SFlow}.

We begin with an observation.

\begin{lemma}\label{Lem:monotoneDimension2}
If $\Omega(t)=\varphi(t)+\omega(t)+\bar{\varphi}(t)$ is a solution to \eqref{Eq:H-SFlow} with initial data $\Omega_0$ on a compact complex surface.
Then the function $F(t)\triangleq(\varphi(t),\varphi(t))_{\omega(t)}$ is monotonically decreasing.
Moreover, if the initial data $\omega_0$ is non-K{\"a}hler, it is strictly monotonically decreasing.
\end{lemma}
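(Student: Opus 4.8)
The plan is to reduce everything to complex dimension two (so $n=2$), where the quantity $F$ simplifies dramatically, and then to differentiate directly and integrate by parts.

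First I would record the dimension-two form of $F$. Since $\varphi$ is a $(2,0)$-form on a surface it is automatically primitive, and the Lefschetz computation already used in the proof of Theorem \ref{Thm:Vt} gives $(\varphi,\varphi)_\omega=\tfrac{1}{(n-2k)!}\int_M\varphi^k\wedge\bar\varphi^k\wedge\omega^{n-2k}$, which for $n=2,k=1$ reads $(\varphi,\varphi)_\omega=\int_M\varphi\wedge\bar\varphi$. Thus $F(t)=\int_M\varphi(t)\wedge\bar\varphi(t)$, a manifestly real and nonnegative quantity depending on $\varphi$ alone (though $\varphi$ remains coupled to $\omega$ through the flow).

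Next I would differentiate. Because $\varphi\wedge\bar\varphi$ is a top form built from even-degree factors, $\frac{d}{dt}F=2\operatorname{Re}\int_M\dot\varphi\wedge\bar\varphi$. By Remark \ref{Rm:varphiFlowTorsion} the $(2,0)$-evolution can be written $\dot\varphi=\partial\bar\partial^*\omega$, so I must evaluate $\int_M\partial\bar\partial^*\omega\wedge\bar\varphi$. Integrating by parts via Stokes on the closed surface, and noting that the $\bar\partial$-contributions land in bidegrees that vanish in dimension two, moves the $\partial$ onto $\bar\varphi$; the Hermitian-symplectic relation $\partial\bar\varphi=-\bar\partial\omega$ (the conjugate of $\bar\partial\varphi+\partial\omega=0$ from \eqref{Eq:existenceOfHS}) then turns the integral into $-\int_M\bar\partial^*\omega\wedge\bar\partial\omega$. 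Finally, since $*\omega=\omega$ in complex dimension two, I would substitute $\bar\partial^*\omega=-*\partial\omega$ and recognize the result as a Hodge norm, obtaining $\int_M\bar\partial^*\omega\wedge\bar\partial\omega=\|\bar\partial\omega\|_\omega^2=\|\partial\omega\|_\omega^2$. Collecting signs yields the clean identity $\frac{d}{dt}F(t)=-2\|\partial\omega(t)\|_{\omega(t)}^2\le 0$, which immediately gives monotone decrease and shows $F$ is stationary exactly when $\partial\omega=0$, i.e.\ when $\omega$ is K\"ahler.

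For the strict statement the remaining point is to show that if $\omega_0$ is non-K\"ahler then $\partial\omega(t)$ cannot vanish at any time, so that $F'<0$ throughout and $F$ is genuinely strictly decreasing; I expect this to be the main obstacle. The torsion $\partial\omega$ evolves by the heat-type equation \eqref{Eq:torsionFlow}, $\frac{\partial}{\partial t}\partial\omega=\partial\bar\partial\,\text{tr}_g(\partial\omega)$, with smooth time-dependent coefficients. I would argue that a solution of such a parabolic system vanishing at some positive time must vanish identically backward in time (backward uniqueness), forcing $\partial\omega_0=0$ and contradicting the non-K\"ahler hypothesis; alternatively one can invoke real-analyticity in $t$ of the flow, so that $t\mapsto\|\partial\omega(t)\|^2$ is real-analytic and hence, not being identically zero, vanishes only on a discrete set. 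Either route gives $\partial\omega(t)\neq 0$ on the whole existence interval and thus strict monotonicity. (On a genuinely non-K\"ahler surface this is automatic since no metric is closed; the only subtlety is a K\"ahler surface carrying a non-closed pluriclosed $\omega_0$.)
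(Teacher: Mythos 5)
Your derivative computation is exactly the paper's: both reduce $F$ to $\int_M\varphi\wedge\bar\varphi$ by primitivity of $(2,0)$-forms, differentiate using $\dot\varphi=\partial\bar\partial^{*}\omega$ from Remark \ref{Rm:varphiFlowTorsion}, integrate by parts (the $\bar\partial$-part of Stokes dying for bidegree reasons), and use $\partial\bar\varphi=-\bar\partial\omega$ to obtain $\frac{d}{dt}F=-2(\bar\partial\omega,\bar\partial\omega)_{\omega(t)}\le 0$. Where you genuinely differ is the strictness claim, and there the comparison is instructive. The paper argues by contradiction: if the torsion vanished, take the first vanishing time $t_{*}$, invoke uniqueness of pluriclosed flow on a neighborhood $(t_{*}-\varepsilon,t_{*}+\varepsilon)$ together with the fact that the flow started from K\"ahler data degenerates to K\"ahler--Ricci flow (hence stays K\"ahler), and deduce $\partial\omega_{t}=0$ for $t$ slightly \emph{below} $t_{*}$, a contradiction. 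You propose instead backward uniqueness for the parabolic equation \eqref{Eq:torsionFlow} satisfied by $\partial\omega$, or real-analyticity in $t$. These are the same difficulty in different clothing: what the paper uses on the left half of its two-sided neighborhood is precisely a backward-uniqueness statement, which does \emph{not} follow from the standard (forward) uniqueness theorem for parabolic equations; so your flagging of backward uniqueness as ``the main obstacle'' correctly identifies the unstated crux of the paper's own argument. Your analyticity variant even buys a little more: since $F'\le 0$ everywhere, zeros of $\partial\omega(t)$ confined to a discrete set would already give strict decrease, while a single zero at $t_{0}$ propagates forward (K\"ahler preservation plus forward uniqueness force $\partial\omega\equiv 0$ on $[t_{0},T)$), so analyticity of $t\mapsto\|\partial\omega(t)\|^{2}$ rules out any zero at once. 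Just be aware that neither backward uniqueness nor time-analyticity is a citation-free ``standard theorem'' for such systems; on this point your proposal is no less rigorous than the paper's proof, but no more complete either.
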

\begin{proof}
Consider the function $F(t)=(\varphi(t),\varphi(t))_{\omega(t)}$.
Derivate it with respect to $t$ and notice that $\varphi$ satisfies \eqref{Eq:H-SFlow}.
\begin{align*}
\frac{d}{dt}F
&=
\frac{d}{dt}
\int_{M}\varphi
\wedge\bar{\varphi}
\\
&=
\int_{M}
\frac{\dd}{\dd t}\varphi
\wedge\bar{\varphi}
+
\text{conjugation}
\\
&=
\int_{M}
\dd\db^*\omega
\wedge\bar{\varphi}
+
\text{conjugation}
\\
\end{align*}
The first line uses the fact $\varphi$ is a prime form with respect to the adjoint Lefschetz operator induced by $\omega(t)$.
Using integration by parts and noting that $\dd\omega+\db\varphi=0$, we have
\begin{align*}
\int_{M}
\dd\db^*\omega
\wedge\bar{\varphi}
=&
-\int_{M}
\db^*\omega
\wedge\db\omega
=
-(\db\omega,\db\omega)_t
\end{align*}
So
\begin{align*}
\frac{d}{dt}F
=-2(\db\omega,\db\omega)_t
\leq0.
\end{align*}
Thus $(\varphi,\varphi)_t$ is monotonically decreasing.

Then we prove it is actually strictly monotonically decreasing when $\omega_0$ is non-K{\"a}hler.
We just need to show that the solution $\omega(t)$ to pluriclosed flow is non-K{\"a}hler when initial metric is non-K{\"a}hler.

Assume $\tilde{\omega}_t$ is a solution with non-K{\"a}hler initial metric.
If $\dd\tilde{\omega}_{t_0}=0$ for some $t_0>0$, then we can find $t_{*}>0$ such that $\dd\omega_{t_{*}}=0$ and $\dd\tilde{\omega}_{t}\neq0$ for $t<t_{*}$.
From the standard theorem of parabolic equations, we get the uniqueness of solutions to flow \eqref{Eq:pluriclosedFlowHodge} with condition $\omega(t_{*})=\tilde{\omega}_{t_{*}}$ in small neighborhood $(t_{*}-\varepsilon,t_{*}+\varepsilon)$.
Tian and Streets\cite{PF} point out that pluriclosed flow degenerates to K{\"a}hler-Ricci flow when initial data is K{\"a}hler and so preserves K{\"a}hler condition.
Then $\dd\tilde{\omega}_t=0$ in $(t_{*}-\varepsilon,t_{*}+\varepsilon)$ by uniqueness.
This is a contradiction.
Thus $\tilde{\omega}_t$ will not be K{\"a}hler at any finite time when the solution exists.
\end{proof}

As an application, we show that a Hermitian-symplectic structure can deform to a K{\"a}hler structure, if \eqref{Eq:H-SFlow} converges at infinity time.
Notice that
\begin{align*}
(\varphi(t),\varphi(t))_{t}-(\varphi(0),\varphi(0))_{0}
=-\int_{0}^{t}(\dd\omega(s),\dd\omega(s))_{s}ds
\end{align*}
for all $t>0$.
So we have $\int_{0}^{+\infty}(\dd\omega(s),\dd\omega(s))_{s}ds<\infty$ for $(\varphi(t),\varphi(t))_{t}$ is always positive.
If $\omega(t)$ converges to $\omega_{\infty}$ at infinity time, then $(\dd\omega_\infty,\dd\omega_\infty)_{\omega_{\infty}}=0$.
Thus the limitation metric is K{\"a}hler.
In fact, this is true in every dimension.
And the proof is due to Jeffrey Streets.
We just need to make a slight adjustment to the proof of Proposition 4.2 in \cite{StreetsYury}.
In short, the limitation of a pluriclosed flow must be a steady soliton, which satisfies $d^{*}(e^{-f}H_{\infty})=0$ for some function $f$.
Here $H_{\infty}=\sqrt{-1}(\dd-\db)\omega_{\infty}$ is the torsion form of $\omega_{\infty}$.
Then we have $\db^{*}(e^{-f}\dd\omega_{\infty})=0$.
And note that $\omega_{\infty}$ is Hermitian-symplectic.
So
\begin{align*}
(e^{-f}\dd\omega_{\infty},\dd\omega_{\infty})_{\infty}
=(e^{-f}\dd\omega_{\infty},-\db\varphi_{\infty})_{\infty}
=(\db^{*}(e^{-f}\dd\omega_{\infty}),-\varphi_{\infty})_{\infty}
=0
\end{align*}
Thus $\dd\omega_{\infty}=0$ and the limitation is K{\"a}hler.

But not all solutions can exist for a long time.
Let\rq{}s consider the function $\mathcal{V}(t)$.
In dimension two, it is just the volume of Hermitian-symplectic forms $(\Omega(t),\Omega(t))_{\omega(t)}$ up to a constant.
Applying Theorem \ref{Thm:Vt}, we know that $\mathcal{V}(t)$ is a polynomial of degree at most $2$ such that all coefficients are determined by initial data.
So we can calculate all roots of $\mathcal{V}(t)$ explicitly.
By direct computation, we know that the coefficients are $a_0=\frac{1}{2}(\Omega_0,\Omega_0)_0>0$, $a_2=c_1^2/2$ and
\begin{align*}
a_1=\int_{M}\omega_0
\wedge\sqrt{-1}\dd\db\log\det g_0.
\end{align*}
All possibilities are listed in Table \ref{Tab:dimension2}.

\begin{table}[!htbp]
\centering
\caption{All possibilities in dimension two}
\renewcommand{\arraystretch}{1.5}
\renewcommand{\tabcolsep}{0.2cm}
\begin{tabular}{|c|c|c|c|c|}
\hline
$a_2$ &$a_1$ &$\Delta=a_1^2-4a_2a_0$ &Minimum positive root &Is an obstruction?
\\
\hline
$=0$ &$\geq 0$ &$-$ &$-$ &$-$
\\
\hline
$=0$ &$<0$ &$-$ &$-a_0/a_1$ &$\surd$
\\
\hline
$>0$ &$\geq0$ &$-$ &$-$ &$-$
\\
\hline
$>0$ &$<0$ &$< 0$ &$-$ &$-$
\\
\hline
$>0$ &$<0$ &$\geq 0$ &$-(a_1+\Delta)/(2a_2)$ &$\surd$
\\
\hline
$<0$ &$-$ &$-$ &$-(a_1+\Delta)/(2a_2)$ &$\surd$
\\
\hline
\end{tabular}
\label{Tab:dimension2}
\end{table}

For example, by the classification of Kodaira, the ruled surfaces of genus $f\geq 1$ (may see Table 10 in chapter {\rm VI} of \cite{MR2030225}, where uses $g$ to denote genus) have $c_1^2=8(1-f)$.
Thus there is no global solution to Hermitian-symplectic flow \eqref{Eq:H-SFlow} on ruled surfaces of genus great that $1$.

As the end of this section, we prove again that non-K{\"a}hler surfaces admit no static metric with $\lambda<0$ without help of classification of compact complex surfaces.

\begin{theorem}
Non-K{\"a}hler compact complex surfaces admit no static metric with $\lambda<0$.
\end{theorem}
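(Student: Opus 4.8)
The plan is to argue by contradiction: suppose $(M^{4},J)$ is a non-K\"ahler compact complex surface carrying a static metric $\omega_0$ with $\rho^{1,1}(\omega_0)=\lambda\omega_0$ and $\lambda<0$. Since $\lambda\neq0$, the metric $\omega_0$ is Hermitian-symplectic (take $\varphi_0=\tfrac1\lambda\rho^{2,0}(\omega_0)$, so that $\Omega_0=\varphi_0+\omega_0+\bar\varphi_0=\tfrac1\lambda\rho$ is closed and real with positive $(1,1)$-part), and I may run the flow \eqref{Eq:H-SFlow} with this initial datum. The first observation I would establish is that the Bismut connection, and hence $\rho$ and $\rho^{1,1}$, is invariant under constant rescalings of the metric: writing $\nabla=\nabla^{LC}+\tfrac12 g^{-1}H$ with $H=-d^{c}\omega$, the substitution $\omega\mapsto c\omega$ leaves $\nabla^{LC}$ unchanged and scales $g^{-1}$ by $c^{-1}$ and $H$ by $c$, so $\nabla$, its curvature, and $\rho^{1,1}$ are all unchanged. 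Consequently the self-similar family $\omega(t)=(1-\lambda t)\omega_0$ solves pluriclosed flow, since $\partial_t\omega(t)=-\lambda\omega_0=-\rho^{1,1}(\omega_0)=-\rho^{1,1}(\omega(t))$. Because $\lambda<0$ we have $1-\lambda t>0$ for every $t\ge0$, so this is a genuine global solution; the companion $\varphi(t)$ is global as well, since along \eqref{Eq:H-SFlow} its equation reduces to the pure forcing $\partial_t\varphi=\partial\bar\partial^{*}\omega$ (Remark \ref{Rm:varphiFlowTorsion}) driven by the globally defined $\omega(t)$.

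Next I would feed this explicit solution into Lemma \ref{Lem:monotoneDimension2}, which gives
\[
\frac{d}{dt}F(t)=-2\,(\bar\partial\omega(t),\bar\partial\omega(t))_{\omega(t)},\qquad F(t)=(\varphi(t),\varphi(t))_{\omega(t)}.
\]
The key computation is the homogeneity of this $L^{2}$-norm under the scaling $g(t)=(1-\lambda t)g_0$. On a real four-manifold the induced metric on $3$-forms scales by $(1-\lambda t)^{-3}$ while the volume scales by $(1-\lambda t)^{2}$; combined with $\bar\partial\omega(t)=(1-\lambda t)\bar\partial\omega_0$ this yields
\[
(\bar\partial\omega(t),\bar\partial\omega(t))_{\omega(t)}=(1-\lambda t)\,(\bar\partial\omega_0,\bar\partial\omega_0)_{\omega_0}=(1-\lambda t)\,D_0,
\]
where $D_0=(\bar\partial\omega_0,\bar\partial\omega_0)_{\omega_0}>0$ is strictly positive precisely because $\omega_0$ is non-K\"ahler. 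Integrating gives
\[
F(t)=F(0)-2D_0\,t+\lambda D_0\,t^{2},
\]
a quadratic polynomial in $t$ whose leading coefficient $\lambda D_0$ is negative.

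Finally I would extract the contradiction: by definition $F(t)=(\varphi(t),\varphi(t))_{\omega(t)}\ge0$ throughout the (global) existence interval, whereas the displayed quadratic tends to $-\infty$ as $t\to+\infty$. Equivalently, at the first time $t_{*}>0$ with $F(t_{*})=0$ one must have $\varphi(t_{*})=0$, hence $\partial\omega(t_{*})=-\bar\partial\varphi(t_{*})=0$, forcing $\omega(t_{*})=(1-\lambda t_{*})\omega_0$ and therefore $\omega_0$ to be K\"ahler, against the hypothesis. Either way no static metric with $\lambda<0$ can exist. I expect the main obstacle to be essentially computational and twofold: verifying cleanly that $\omega(t)=(1-\lambda t)\omega_0$ really is the flow (i.e.\ the scale-invariance of $\rho^{1,1}$), and carrying out the degree-and-volume bookkeeping that produces the single surviving factor $(1-\lambda t)$ in the norm of $\bar\partial\omega(t)$. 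It is exactly this factor that upgrades the merely decreasing $F$ of Lemma \ref{Lem:monotoneDimension2} into a strictly concave quadratic, which is what renders the positivity of $F$ untenable.
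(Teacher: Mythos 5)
Your proposal is correct and follows essentially the same route as the paper's proof: extend the static metric to a Hermitian-symplectic form using $\rho^{2,0}(\omega_0)$, use scale-invariance of the Bismut--Ricci form to exhibit the global self-similar solution $(1-\lambda t)\omega_0$ of \eqref{Eq:H-SFlow}, and play this against Lemma \ref{Lem:monotoneDimension2}. The only cosmetic difference is the endgame: the paper computes $F(t)=(1+t)^{2}F(0)$ directly and contradicts monotone decrease, whereas you integrate $\frac{d}{dt}F=-2(\db\omega,\db\omega)_t$ to obtain a concave quadratic contradicting $F\ge 0$ --- the same scaling bookkeeping, packaged slightly differently.
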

\begin{proof}
Without loss of generality, we only need to consider the case of $\lambda=-1$.

Assume $\omega_0$ is a static metric such that $\omega_0=-\rho^{1,1}(\omega_0)$.
Notice that $\omega_0$ can be extended to a Hermitian-symplectic form by selecting $\varphi_0=-\rho^{2,0}(\omega_0)$.
And now $\omega(t)\triangleq(1+t)\omega_0$ is a global solution to flow \eqref{Eq:H-SFlow} because $\rho^{1,1}$ is a homogeneous operator with respect to metric, i.e. $\rho^{1,1}(C\omega)=\rho^{1,1}(\omega)$ for any constant $C$.
In this situation, we have $\varphi(t)=(1+t)\varphi_0$ and
\begin{align*}
F(t)=(\varphi(t),\varphi(t))_{\omega(t)}
=\int_{M}\varphi(t)\wedge\bar{\varphi}(t)
=(1+t)^{2}(\varphi_0,\varphi_0)_{\omega_0}.
\end{align*}
From Lemma \ref{Lem:monotoneDimension2}, we know that $F(t)$ is monotonically decreasing.
So we must have $(\varphi_0,\varphi_0)_{\omega_0}=0$.
That means $\varphi_0=0$ and $\omega_0$ is a K{\"a}hler metric.
Thus we complete our proof.
\end{proof}

\section{Appendix}

We give a proof of the lemma used above.
\begin{lemma}\label{Lemma:traceAndHodge}
Given a Hermitian manifold $(M^{2n},g)$ and denote $\omega$ the fundamental form corresponding to metric $g$.
Then we have
\begin{align*}
*(\omega^{n-2}\wedge\beta)
=-(n-2)!\text{\rm tr}_{g}(\beta)
\end{align*}
for arbitrary (2,1)-forms $\beta$.
\end{lemma}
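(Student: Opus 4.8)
The plan is to verify the identity $*(\omega^{n-2}\wedge\beta)=-(n-2)!\,\mathrm{tr}_g(\beta)$ by a pointwise computation. Since both sides are linear in $\beta$, I would fix a point $p\in M$ and choose local unitary coframe $\{dz^i\}$ so that $g_{i\bar j}=\delta_{ij}$ at $p$; then the fundamental form takes the normalized shape $\omega=\sqrt{-1}\sum_i dz^i\wedge d\bar z^i$, and it suffices to check the identity on a spanning set of $(2,1)$-forms at $p$, namely the monomials $\beta=dz^a\wedge dz^b\wedge d\bar z^c$ with $a<b$. Because both the Hodge star and $\mathrm{tr}_g=\sqrt{-1}\Lambda_\omega$ are algebraic operators determined by $\omega$ at the point, reducing to this finite check is legitimate.

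For the right-hand side, I would compute $\mathrm{tr}_g(\beta)=\sqrt{-1}\Lambda_\omega\beta$ directly from the action of the adjoint Lefschetz operator on the monomial $dz^a\wedge dz^b\wedge d\bar z^c$. At a unitary point $\Lambda_\omega$ contracts a holomorphic index against a matching antiholomorphic index, so $\Lambda_\omega(dz^a\wedge dz^b\wedge d\bar z^c)$ produces a $(1,0)$-form proportional to $\delta_{ac}dz^b-\delta_{bc}dz^a$ up to a factor of $\sqrt{-1}$, and hence $\mathrm{tr}_g(\beta)$ is a concrete $(1,0)$-form (zero unless $c\in\{a,b\}$). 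For the left-hand side I would first expand $\omega^{n-2}$ as a sum of wedge products of the $n$ basic two-forms $\sqrt{-1}\,dz^i\wedge d\bar z^i$, wedge with $\beta$, and then apply the standard formula for the Hodge star of a top-degree-building monomial. The key elementary fact is that on the volume form $\omega^n/n!$ the star of a unitary monomial is again a signed unitary monomial with an explicit sign and factorial normalization; keeping careful track of the $(n-2)!$ coming from the multiplicity in $\omega^{n-2}$, the signs from reordering $dz$ and $d\bar z$ factors, and the powers of $\sqrt{-1}$ is the whole content of the proof.

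The main obstacle will be bookkeeping rather than conceptual: matching the overall sign and the factor $-(n-2)!$ requires fixing once and for all the orientation convention $\mathrm{dvol}=\omega^n/n!$ and the sign convention in $*$, and then tracking how many transpositions are needed to move the surviving $dz^i\wedge d\bar z^i$ pairs into canonical position when $\beta$ occupies indices $a,b,c$. I expect the $-$ sign to emerge precisely from the antiholomorphic index $d\bar z^c$ being ``used up'' against one of the holomorphic indices, which flips orientation relative to the naive count, while the $(n-2)!$ comes from the number of ways the remaining $n-2$ diagonal pairs in $\omega^{n-2}$ select the complementary indices. Since the statement asserts the identity holds for \emph{arbitrary} $(2,1)$-forms and both operators are $\mathbb{C}$-linear, confirming it on the monomial basis at a unitary point completes the proof; I would finish by remarking that the result is independent of the choices made, as both sides are globally defined tensorial operations.
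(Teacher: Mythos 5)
Your proposal is correct and follows essentially the same route as the paper's proof: a pointwise computation in a unitary frame where $\omega=\sqrt{-1}\sum_i dz^i\wedge d\bar z^i$, expanding $\omega^{n-2}\wedge\beta$ and applying the standard Hodge star formula on unitary monomials, with the sign and the $(n-2)!$ emerging from the same bookkeeping. The only cosmetic difference is that you reduce to monomials $dz^a\wedge dz^b\wedge d\bar z^c$ by linearity, while the paper carries general antisymmetric coefficients $\beta_{ij\bar k}$ through the same calculation.
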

\begin{proof}
Because we can do all calculation in the tangent space at a fixed point, so we choose an orthonormal basis so that 
\begin{align*}
\omega
=\sqrt{-1}\sum_{p}dz^p\wedge d\bar{z}^p
\end{align*}
And the volume form is
\begin{align*}
\frac{1}{n!}\omega^n
=(\sqrt{-1})^n
dz^1\wedge d\bar{z}^1
\wedge\cdots\wedge
dz^n\wedge d\bar{z}^n
\end{align*}
Recall that actions of the Hodge operator on $(n,n-1)$-forms are
\begin{align*}
*(
dz^1\wedge d\bar{z}^1
\wedge\cdots\wedge
dz^s\wedge\widehat{d\bar{z}^s}
\wedge\cdots\wedge
dz^n\wedge d\bar{z}^n
)
=(-\sqrt{-1})^n dz^s
\end{align*}
Assume
$\beta=\frac{1}{2}\beta_{ij\bar{k}}dz^i\wedge dz^j \wedge d\bar{z}^k$ such that
$\beta_{ij\bar{k}}+\beta_{ji\bar{k}}=0$.

By direct calculation,
\begin{align*}
\omega^{n-2}\wedge\beta
&=(\sqrt{-1})^{n-2}(n-2)!
(\sum_{s<t}
dz^1
\wedge\cdots\wedge
dz^s\wedge\widehat{d\bar{z}^s}
\wedge\cdots\wedge
dz^s\wedge\widehat{d\bar{z}^s}
\wedge\cdots\wedge
d\bar{z}^n
)
\\
&\qquad\qquad\wedge(\frac{1}{2}\beta_{ij\bar{k}}dz^i\wedge dz^j \wedge d\bar{z}^k)
\\
&=(\sqrt{-1})^{n-2}(n-2)!
\sum_{k}
\beta_{sk\bar{k}}
dz^1
\wedge\cdots\wedge
dz^s\wedge\widehat{d\bar{z}^s}
\wedge\cdots\wedge
d\bar{z}^n
\end{align*}
So
\begin{align*}
*(\omega^{n-2}\wedge\beta)
&=-(n-2)!\sum_{k}
\beta_{sk\bar{k}}dz^s
\\
&=-(n-2)!
\text{tr}_{g}(\beta)
\end{align*}
The last equal uses the definition of $\text{tr}_{g}(\cdot)$ and note that we calculate under orthonormal basis.
\end{proof}

\bibliographystyle{plain}
\bibliography{pcfhsYe.bib}

\end{document}